\newcommand{\Cdb}{\ensuremath{\mathbb{C}}}
\newcommand{\Edb}{\ensuremath{\mathbb{E}}}
\newcommand{\Zdb}{\ensuremath{\mathbb{Z}}}
\newcommand{\M}{\mbox{${\mathcal M}$}}
\renewcommand{\S}{\mbox{${\mathcal S}$}}
\newcommand{\U}{\mbox{${\mathcal U}$}}
\newcommand{\norm}[1]{\Vert#1\Vert}
\newtheorem{theorem}{Theorem}[section]
\newtheorem{lemma}[theorem]{Lemma}
\newtheorem{corollary}[theorem]{Corollary}
\newtheorem{proposition}[theorem]{Proposition}
\newtheorem{definition}[theorem]{Definition}
\theoremstyle{remark}
\newtheorem{remark}[theorem]{\bf Remark}
\theoremstyle{definition}
\numberwithin{equation}{section}
\begin{document}

\title[absolute dilation of Fourier multipliers]
{absolute dilation of Fourier multipliers}

\bigskip
\author[C. Le Merdy]{Christian Le Merdy}
\email{clemerdy@univ-fcomte.fr}
\address{Laboratoire de Math\'ematiques de Besan\c con, 
Universit\'e Marie et Louis Pasteur, 16 route de Gray,
25030 Besan\c{c}on Cedex, FRANCE}

\author[S. Zadeh]{Safoura Zadeh}
\email{jsafoora@gmail.com}
\address{School of Mathematics, University of Bristol, United Kingdom}

\date{\today}


\begin{abstract} 
Let \( \M \) be a von Neumann algebra equipped with a normal semifinite faithful (nsf) trace. We say that an operator \( T :\M\to \M\) is absolutely dilatable if there exist another von Neumann algebra  
\( M \) with an nsf trace, a unital normal trace preserving \(\ast\)-homomorphism \( J: \M \to M \), and a trace preserving \(\ast\)-automorphism \( U: M \to M \) such that  \(T^k = {\mathbb E}_J U^k J \quad \text{for all } k \geq 0, \)  where \( {\mathbb E}_J: M \to \M \) is the conditional expectation associated with \( J \).  For a discrete amenable group \( G \) and a function \( u:G\to\mathbb{C} \) inducing a unital completely  positive Fourier multiplier \( M_u: VN(G) \to VN(G) \), we establish the following transference theorem: the operator \( M_u \) admits an absolute dilation if and only if its associated Herz-Schur multiplier does.  From this result, we deduce 
a characterization of Fourier multipliers with an absolute dilation in this setting.  Building on the transference result, we construct the first known example of a unital completely positive Fourier multiplier that does not admit an absolute dilation. This example arises in the symmetric group \( \S_3 \), the smallest group where such a phenomenon occurs.  Moreover, we show that for every abelian group \( G \), every Fourier multiplier always admits an absolute dilation.
\end{abstract}

\maketitle

\noindent
{\it 2000 Mathematics Subject Classification :}  47A20,
46L51, 43A22.

\smallskip
\noindent
{\it Key words:} Fourier multipliers, dilations, von Neumann algebras

\vskip 1cm

\section{Introduction}\label{Intro}
Rota’s \textit{Alternierende Verfahren} theorem in classical probability theory explores the convergence of iterates of measure
preserving Markov operators. Specifically, if \( T \) is a measure preserving Markov operator on a probability space \( (\Omega, \mu) \), then for any \( p \geq 1 \) and any function \( f \in L^p(\Omega, \mu) \), the sequence \( T^n (T^*)^n f \) converges almost everywhere. The proof relies on a dilation theorem.

In the noncommutative setting, C. Anantharaman-Delaroche extended this result in \cite{AD} by replacing the probability space with a von Neumann algebra \( M \) equipped with a normal faithful state \( \tau \). In this framework, \( T \) is a unital completely positive map preserving \( \tau \) and intertwining with the modular automorphism group of \( \tau \). Such maps, known as $\tau$-Markov operators, play a crucial role in noncommutative probability and quantum dynamics.

To extend Rota’s dilation theorem to the noncommutative setting, Anantharaman-Delaro\-che introduced an extra condition on \( T \) and referred to such operators as \emph{factorizable}. This class of operators 
encompasses all measure preserving 
Markov maps on abelian von Neumann 
algebras \cite[Remark 6.3(a)]{AD}, trace preserving 
Markov maps on \( M_2(\mathbb{C}) \) \cite{K}, 
and Schur multipliers 
associated with positive semidefinite matrices 
in \( M_n(\mathbb{R}) \) whose diagonal entries 
are all 
equal to 1 \cite{R}.
 
 In \cite{AD}, Anantharaman-Delaroche asked 
 if 
 all $\tau$-Markov operators are factorizable. 
 U. Haagerup and M. Musat \cite[Section 3]{HM} answered this question in the negative by providing examples 
 of non-factorizable trace preserving 
 Markov operators on the 
 matrix algebra $M_n(\mathbb{C})$ for $n\ge3$. 
 These examples are noteworthy not only as counterexamples but also because they occur in the tracial setting, 
 where one might expect nicer structural properties. 
 Haagerup and Musat also established an equivalence between factorizability and a dilation notion introduced 
 by Kümmerer \cite[Definition 2.1.1]{k1}. 
 In \cite[Definition 2.1]{DL1}, 
 C. Duquet and the first named author 
 examined Kümmerer’s dilation property, 
 the so-called \emph{absolute dilation property}, in the context of a von Neumann algebras equipped 
 with a normal semifinite faithful trace. 
 They then characterized the 
bounded Schur multipliers that admit an absolute dilation.

A related concept is that of Fourier multipliers. Let $G$ be a locally compact group and let $u\colon G\to\mathbb{C}$ be a continuous function that induces a unital completely positive Fourier multiplier $M_u\colon VN(G)\to VN(G)$.
É. Ricard \cite[Corollary]{R} showed that every 
such Fourier multiplier induced by a real-valued function \( u: G \to \mathbb{R} \) on a discrete group \( G \) is
absolutely dilatable (see also \cite[Theorem 4.6]{A}). 
C. Duquet \cite[Theorem 5.16]{D0} extended this result to the setting of non-discrete unimodular groups (see also \cite[Theorem 4.1]{DLM2}).
\medskip

Given $u\colon G\to\mathbb{C}$ such that \(M_u\) is unital and completely positive, we can associate a function
$\phi_u\colon G\times G\to\mathbb{C}$ defined by
\begin{equation}\label{HSdef}
\phi_u(s,t)=u(st^{-1}),\quad s,t\in G.
\end{equation}
Then $\phi_u$ induces a unital (completely) positive Schur multiplier, called the Herz–Schur multiplier,
\[
T_{\phi_u}\colon B(L^2(G))\to B(L^2(G)).
\]

In the first part of this paper, we examine the connection between the absolute dilatability of Fourier multipliers and Herz-Schur multipliers. We show that for discrete groups, if a Fourier multiplier \( M_u \) admits an absolute dilation, then the associated Herz-Schur multiplier is absolutely dilatable. Furthermore, we show that when the discrete group \( G \) is amenable, the converse also holds. Using \cite[Theorem 1.1]{DL1}, this provides a characterization of absolutely dilatable Fourier multipliers on amenable discrete groups.

The next result of this paper, which partially builds on the aforementioned development, presents an example of a unital completely positive Fourier multiplier that does not admit an absolute dilation. This provides the first known example
of such multipliers and is constructed on the symmetric group \( \S_3 \), the smallest group where such examples can
exist. Indeed, in Section \ref{Sec 3}, we show that when \( G \) is abelian, every unital (completely) positive 
Fourier multiplier admits an absolute dilation.

\section{Preliminaries and background}\label{Prelim}

\subsection{Tensor products}\label{SS-TP}
We assume that  the reader is familiar with the basics of von Neumann algebras, completely positive maps, and traces, and refer to \cite{Tak} for  details.
Given any two von Neumann algebras $M$ and $N$, we denote their von Neumann algebra tensor product by $M\overline{\otimes} N$. If $\Gamma\colon N_1\to N_2$ 
is a normal completely positive map, then for any von Neumann algebra
$M$, the tensor extension  $I\otimes \Gamma\colon M\otimes N_1\to M\otimes N_2
\subset M\overline{\otimes} N_2$ 
uniquely extends to a normal completely positive map
on $M\overline{\otimes} N_1$, denoted by
$$
I\overline{\otimes} \Gamma\colon
M\overline{\otimes} N_1\longrightarrow M\overline{\otimes} N_2.
$$
See \cite{NT} for details.

Let $M$ be a von Neumann algebra. If $\tau$ is a normal, semifinite, faithful trace on $M$, we call $(M, \tau)$ a tracial von Neumann algebra. If, in addition, $\tau$ is a state, we refer to $(M, \tau)$ as a normalized tracial von Neumann algebra.

Let $H$ be a Hilbert space, and let $B(H)$ denote the von Neumann algebra of all bounded operators on $H$. We denote its usual trace by ${\rm tr}$. Given a tracial von Neumann algebra $(M, \tau)$, we equip $B(H) \overline{\otimes} M$ with the unique normal, semifinite, faithful trace that maps $a \otimes b$ to ${\rm tr}(a) \tau(b)$ for any $a \in B(H)^+ \setminus \{0\}$ and $b \in M^+ \setminus \{0\}$ (see \cite[Proposition V.2.14]{Tak}).

\subsection{Fourier multipliers and Schur multipliers}\label{SS-M}
Let $G$ be a locally compact unimodular group equipped with a fixed Haar measure $ds$. We denote the unit of $G$ by $e_G$. Let $\lambda\colon G \to B(L^2(G))$ be the left regular representation of $G$, given by  
\[
[\lambda(s)h](t) = h(s^{-1}t), \quad h \in L^2(G), \ s,t \in G.
\]  
We denote its group von Neumann algebra by $VN(G)$, defined as  
\[
VN(G) = \overline{\{\lambda(t) : t \in G\}}^{w^*} \subseteq B(L^2(G)),
\]  
where the closure is taken in the $w^*$-topology of $B(L^2(G))$.  
Next, for any $f \in L^1(G)$, we define  
\[
\lambda(f) = \int_G f(s)\lambda(s)\, ds,
\]  
where the operator integral is understood in the strong sense. The operators $\lambda(f)$ belong to $VN(G)$.

We let $\tau_G$ denote the Plancherel weight on $VN(G)$ and refer to \cite{Haag} for details (see also \cite[Section VII.3]{Tak2}). Since $G$ is assumed to be unimodular, $\tau_G$ is actually a normal semifinite faithful trace. Moreover,  
\begin{equation}\label{Plancherel}
\tau_G\bigl(\lambda(f)^*\lambda(f)\bigr) = 
\norm{f}_{L^2(G)}^2, \quad f \in L^1(G) \cap L^2(G),
\end{equation}  
and $\tau_G$ is the unique normal semifinite faithful trace on $VN(G)$ with this property.

Following \cite{DCH}, we say that a continuous
function $u\colon G\to\Cdb$ induces a bounded Fourier
multiplier on $VN(G)$ if there exists a normal operator
$M_u : VN(G)\to VN(G)$ such that 
$$
M_u(\lambda(t))=u(t)\lambda(t),\quad t\in G.
$$
In this case, $M_u$ is unique. It is plain that $M_u$ is unital if and only if $u(e_G)=1$. Moreover, by \cite[Proposition 4.2]{DCH}, a continuous function $u\colon G\to\Cdb$ induces a completely positive Fourier
multiplier on $VN(G)$ if and only if $u$ is 
positive definite.

Let $I$ be an index set, and let $\phi = \{\phi(i,j)\}_{(i,j) \in I^2}$ be a family of complex numbers. We say that $\phi$ induces a bounded Schur multiplier on $B(\ell^2_I)$ if there exists a bounded operator $T_\phi: B(\ell^2_I) \to B(\ell^2_I)$ such that 
\begin{equation}\label{Schur}
T_\phi\bigl([a_{ij}]\bigr) = [\phi(i,j) a_{ij}], \quad [a_{ij}]_{(i,j) \in I^2} \in B(\ell^2_I).
\end{equation}
It is clear that $T_\phi$ is unital if and only if $\phi(k,k) = 1$ for all $k \in I$. The next result follows from \cite[Theorem 3.7]{P}.

\begin{lemma}\label{Positive}
A family $\phi = \{\phi(i,j)\}_{(i,j) \in I^2}$ induces a positive Schur multiplier $T_\phi$ if and only if, for any finite family $(k_1, \ldots, k_n)$ of elements in $I$, the matrix $[\phi(k_i, k_j)]_{1 \leq i,j \leq n}$ is positive semi-definite. Moreover, $T_\phi$ is completely positive in this case.
\end{lemma}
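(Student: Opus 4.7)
The plan is to prove the two implications and obtain the "Moreover" clause simultaneously via a Stinespring-type construction. The key idea is that positivity tests on rank-one projections yield the finite principal minors, while a Gram/Kolmogorov representation of the kernel $\phi$ provides a factorization of $T_\phi$ through a $\ast$-homomorphism.

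For the forward direction, suppose $T_\phi$ is positive. Fix $k_1,\dots,k_n\in I$ and set $v=\sum_{s=1}^{n}e_{k_s}\in\ell^2_I$. The rank-one operator $P=vv^*\in B(\ell^2_I)$ is positive with matrix $(P_{ij})$ equal to $1$ at every position $(k_s,k_t)$ and zero elsewhere, so $T_\phi(P)$ is the operator whose only nonzero entries are the $\phi(k_s,k_t)$ at the positions $(k_s,k_t)$. Since $T_\phi(P)\ge 0$, compressing to the span of $\{e_{k_1},\dots,e_{k_n}\}$ shows that the $n\times n$ matrix $[\phi(k_s,k_t)]$ is positive semi-definite.

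For the converse together with the "Moreover" statement, assume every finite principal submatrix of $\phi$ is positive semi-definite. Then $\phi$ is a positive semi-definite kernel on $I$, and the standard Kolmogorov/Aronszajn construction produces a Hilbert space $K$ and a family $(v_i)_{i\in I}\subset K$ such that
\[
\phi(j,i)=\langle v_i,v_j\rangle_K,\qquad i,j\in I.
\]
Since $T_\phi$ is bounded and $T_\phi(e_{ii})=\phi(i,i)e_{ii}$, we get $\sup_{i}\|v_i\|^2=\sup_{i}\phi(i,i)\le\|T_\phi\|<\infty$. Hence the map
\[
V\colon\ell^2_I\longrightarrow \ell^2_I\otimes_2 K,\qquad Ve_i=e_i\otimes v_i,
\]
extends to a bounded operator. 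A direct matrix-unit calculation gives, for every $A\in B(\ell^2_I)$ and all $i,j\in I$,
\[
\langle V^*(A\otimes I_K)Ve_i,e_j\rangle=\langle Ae_i,e_j\rangle\,\langle v_i,v_j\rangle=\phi(j,i)\,A_{ji}=\bigl[T_\phi(A)\bigr]_{ji},
\]
so $T_\phi(A)=V^*(A\otimes I_K)V$ (first on finite-rank operators, then extending by $w^*$-continuity). This exhibits $T_\phi$ as the compression by $V$ of the $\ast$-representation $A\mapsto A\otimes I_K$, so $T_\phi$ is completely positive, and in particular positive.

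The main technical point is the Kolmogorov construction of the vectors $(v_i)$: one must quotient the free pre-Hilbert space on $I$ by the seminorm defined by $\phi$ and complete, and the consistency of this construction over arbitrary finite subsets is exactly the hypothesis that every finite principal submatrix of $\phi$ is PSD. Once that representation is in place, bounding $V$, verifying the Stinespring-type identity on matrix units, and extending it by normality to all of $B(\ell^2_I)$ are routine.
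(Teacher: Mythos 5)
The paper does not prove this lemma at all --- it simply cites \cite[Theorem 3.7]{P} --- so your proposal is supplying an argument where the authors give only a reference. Your route (Kolmogorov decomposition of the kernel, then the Stinespring-type identity $T_\phi(A)=V^*(A\otimes I_K)V$ with $Ve_i=e_i\otimes v_i$) is correct and is well suited to an arbitrary index set $I$. Paulsen's own proof of the cited theorem, in the finite case, goes the other way: write the positive matrix as a finite sum of rank-one positives $\overline{\varphi}\otimes\varphi$ and observe that Schur multiplication by such a matrix is $x\mapsto{\mathfrak a}(\varphi)^*x\,{\mathfrak a}(\varphi)$, a conjugation by a diagonal, hence completely positive; this is exactly the mechanism the paper reuses later in the proof of Corollary \ref{Cor. 5.2}. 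Your single dilation $V$ packages all of these conjugations at once and avoids any convergence issue with infinite sums of rank-ones, so for infinite $I$ it is arguably the cleaner argument. Your forward direction (testing positivity on $vv^*$ with $v=\sum_s e_{k_s}$ and compressing) is the standard one and is fine; just note in passing that repeated indices among the $k_s$ are harmless since duplicating rows and columns preserves positive semi-definiteness.

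One point you should make explicit rather than slide over: in the converse direction you invoke ``$T_\phi$ is bounded'' to get $\sup_i\phi(i,i)\le\|T_\phi\|<\infty$, which is what makes $V$ bounded. The PSD hypothesis alone does \emph{not} give this for infinite $I$ (take $I=\mathbb{N}$ and $\phi(i,j)=ij$: every finite principal submatrix is rank-one positive, yet $T_\phi$ is unbounded). So, read literally, the ``if'' direction of the lemma needs either the standing assumption that $\phi$ already induces a bounded Schur multiplier, or the extra hypothesis $\sup_i\phi(i,i)<\infty$. This is an imprecision in the statement rather than an error in your argument --- in every application in the paper the diagonal is identically $1$ --- but since you quietly used boundedness as an input, you should say so. Also, the final identification $T_\phi(A)=V^*(A\otimes I_K)V$ needs no $w^*$-continuity argument: both sides are bounded operators on $\ell^2_I$ with the same matrix coefficients $\phi(j,i)A_{ji}$, hence equal.
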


Let $G$ be a discrete group and let $u : G \to \mathbb{C}$ be a function. As indicated in the Introduction, we can associate a function $\phi_u : G \times G \to \mathbb{C}$ by setting 
$$
\phi_u(s, t) = u(st^{-1}),\quad s,t\in G.
$$

\begin{lemma}\label{HS}
A function $u: G\to \Cdb$ induces a unital completely positive Fourier multiplier 
on $VN(G)$ if and only if
$\phi_u$ induces a unital (completely) positive
Schur multiplier on $B(\ell^2_G)$.
\end{lemma}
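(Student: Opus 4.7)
The plan is to reduce both sides of the equivalence to the single classical condition that $u$ is positive definite with $u(e_G)=1$, and then appeal to Lemma \ref{Positive} together with the characterization of u.c.p.\ Fourier multipliers already recorded in the excerpt.

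For the Fourier side, I would invoke \cite[Proposition 4.2]{DCH}, which was cited just above the statement: a continuous (here, automatically any) function $u\colon G\to\Cdb$ induces a completely positive Fourier multiplier $M_u$ on $VN(G)$ if and only if $u$ is positive definite, i.e.\ the matrix $[u(s_i s_j^{-1})]_{1\le i,j\le n}$ is positive semi-definite for every finite family $(s_1,\ldots,s_n)$ in $G$. Additionally, $M_u$ is unital if and only if $u(e_G)=1$. So the left-hand side of the equivalence translates exactly into: $u$ is positive definite and $u(e_G)=1$.

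For the Schur side, Lemma \ref{Positive} applied to the index set $I=G$ says that $\phi_u$ induces a positive Schur multiplier on $B(\ell^2_G)$ if and only if, for every finite family $(s_1,\ldots,s_n)$ in $G$, the matrix
\[
[\phi_u(s_i,s_j)]_{1\le i,j\le n}=[u(s_i s_j^{-1})]_{1\le i,j\le n}
\]
is positive semi-definite, which is exactly positive definiteness of $u$. Moreover $T_{\phi_u}$ is unital if and only if $\phi_u(s,s)=1$ for every $s\in G$, i.e.\ $u(e_G)=1$. Finally, Lemma \ref{Positive} also guarantees that positivity of a Schur multiplier automatically upgrades to complete positivity, which justifies the parenthetical ``(completely)'' in the statement.

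Combining these two characterizations, both the left-hand and the right-hand conditions of the lemma are equivalent to the conjunction of $u$ being positive definite and $u(e_G)=1$, which yields the desired equivalence. There is no real obstacle here: the proof is essentially a one-line comparison of the definition of positive definiteness on $G$ with the positive semi-definiteness criterion of Lemma \ref{Positive} applied to the Toeplitz-type kernel $(s,t)\mapsto u(st^{-1})$; the only mild subtlety is remembering that Lemma \ref{Positive} gives complete positivity for free, so one does not need to argue separately for the ``completely positive'' part on the Schur side.
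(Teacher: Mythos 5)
Your proof is correct. The paper itself gives no argument for this lemma, only the citation ``This result goes back to \cite{BF}''; the Bo\.{z}ejko--Fendler reference treats the general transference between completely bounded Fourier multipliers and Herz--Schur multipliers, of which this is the (much easier) unital completely positive special case. Your route --- reducing both sides to the single condition that $u$ is positive definite with $u(e_G)=1$, via \cite[Proposition 4.2]{DCH} on the Fourier side and Lemma \ref{Positive} applied to the kernel $(s,t)\mapsto u(st^{-1})$ on the Schur side --- is exactly the natural self-contained argument in this setting, and it correctly uses the fact that Lemma \ref{Positive} already packages boundedness, positivity and complete positivity of $T_{\phi_u}$ into the positive semi-definiteness of the finite matrices $[u(s_is_j^{-1})]$. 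The only point worth making explicit is that the finite matrices $[\phi_u(s_i,s_j)]=[u(s_is_j^{-1})]$ being positive semi-definite for all finite tuples is one of the standard equivalent formulations of positive definiteness of $u$ (the various conventions $[u(s_j^{-1}s_i)]$, $[u(s_is_j^{-1})]$, etc.\ are interchanged by relabelling $s_i\mapsto s_i^{-1}$ and taking transposes/conjugates, using $u(g^{-1})=\overline{u(g)}$); once that is observed, the two characterizations coincide verbatim and the equivalence follows.
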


This result goes back to \cite{BF}. The correspondence between Fourier and Schur multipliers can be extended to non-discrete groups and completely bounded maps, as shown in \cite{BF2, J}. However, we will not require this broader context.

Schur multipliers of the form $\phi_u$ are referred to as Herz-Schur multipliers.

\subsection{Factorizable and absolutely 
dilatable maps}\label{SS-Fact}
Let $(M, \tau)$ be a tracial von Neumann algebra. For any $1 \leq p < \infty$, we define $L^p(M,\tau)$ as the noncommutative $L^p$-space associated with $(M, \tau)$ \cite{PX}. Specifically, it is the completion 
of the space $\{x \in M : \tau(|x|^p) < \infty\}$ with respect to the norm
$$
\norm{x}_p = \left( \tau(|x|^p) \right)^{\frac{1}{p}}.
$$
In this paper, we will 
mostly focus on the case $p = 1$.

Additionally, for any Hilbert space $H$, the 
noncommutative $L^p$-spaces associated with
$(B(H), \text{tr})$ are the Schatten spaces
$S^p(H)$.

For any tracial von Neumann algebra $(M,\tau)$, the Banach space $L^1(M,\tau)$ is isometrically isomorphic to the predual $M_*$ of $M$. More specifically, $\tau$ extends to a norm one functional on $L^1(M,\tau)$, and the duality pairing on $M \times L^1(M,\tau)$, defined by
$$
\langle x, y \rangle_{M, L^1(M)} 
= \tau(xy),
$$
induces an isometric isomorphism
\begin{equation}\label{Dual}
M \simeq L^1(M,\tau)^*.
\end{equation}

Let $(M_1,\tau_1)$ and $(M_2,\tau_2)$ be two tracial von Neumann algebras. We say that a positive
operator $J: M_1\to M_2$ is trace preserving 
if $\tau_2\circ T=\tau_1$ on $M_1^+$. 

Assume that $J$ is a unital trace preserving $*$-homomorphism. Then $J$ is injective (because $\tau_1$ is faithful). Moreover, $J$ extends to an isometry $ J_1 : L^1(M_1, \tau_1) \to L^1(M_2, \tau_2). $
This follows readily from the fact that \( \tau_2(\vert J(x)\vert) = \tau_2(J(\vert x\vert)) = \tau_1(\vert x\vert)\) for all \(x\in M_1\).
Next, using (\ref{Dual}) for both $M_1$ and $M_2$, we define
$$
{\mathbb E}_J = J_1^* : M_2 \longrightarrow M_1.
$$
It follows from \cite[Proposition V.2.36]{Tak} and its proof that, when $M_1$ is viewed as a sub-von Neumann algebra of $M_2$ via $J$, the operator ${\mathbb E}_J$ acts as the natural conditional expectation from $M_2$ onto $M_1$. For this reason, ${\mathbb E}_J$ is commonly referred to as the conditional expectation associated with $J$.

\begin{definition}\label{Def AD}
Let \((\mathcal{M},\tau_{\mathcal{M}})\)
be a tracial von Neumann algebra. 
We say that an operator \(T:\M\to \M\) is
absolutely dilatable or admits an absolute
dilation if there exist another tracial von 
Neumann algebra \((M,\tau_M)\), a
normal unital trace preserving 
\(*\)-homomorphism \(J:\M\to M\), and a trace  
preserving \(*\)-automorphism \(U:M\to M\) 
such that 
\[
T^k = {\mathbb E}_J U^k J,\quad k\geq 0.
\]
\end{definition}

It is plain that if \(T:\M\to \M\) is absolutely dilatable, then $T$ is normal, unital, completely positive and trace preserving.

\begin{remark}\label{Normalized}
 Let  \(T:(\mathcal{M},\tau_{\mathcal{M}})\to (\mathcal{M},\tau_{\mathcal{M}})\) be an 
absolutely dilatable map and assume that
$\tau_{\mathcal{M}}$ is a state. Let $M,J,U$ as in Definition \ref{Def AD}. Then 
$\tau_M(1) = \tau_M(J(1))=\tau_{\mathcal M}(1)=1$, hence $\tau_M$ is a state. Thus, any
absolutely dilatable map on a normalized tracial von Neumann algebra is dilated through a normalized tracial von Neumann algebra.
\end{remark}

We will consider absolute dilations only for Fourier or Schur multipliers. In
this regard it is useful to note that
any unital positive Fourier multiplier
$M_u : VN(G)\to VN(G)$ is automatically trace preserving, see \cite[Lemma 3.1, (3)]{DLM2}.
Likewise, any unital positive Schur
multiplier $T_\phi\colon B(\ell^2_I)
\to B(\ell^2_I)$ is automatically trace preserving. This is clear from (\ref{Schur})
and the fact that $\phi(k,k)=1$ for all $k\in I
$ when $T_\phi$ is unital.

From now on, we use the abbreviation \(\bf{ucp}\) for ``unital completely positive". According to the previous discussion, we will focus on ucp Fourier or Schur multipliers.

\cite[Theorem 1.1]{DL1} provides a characterization of absolutely dilatable Schur multipliers constructed on a \(\sigma\)-finite measure space. Below, we present this result in the more specific context of discrete sets
(which traces back to \cite[Proposition 2.8]{HM} in the finite-dimensional case).

\begin{theorem}\label{DL1}
Let $I$ be any index set and let 
\(\phi = \{\phi(i,j)\}_{(i,j) \in I^2}\) 
be a family of complex numbers. 
The following assertions are equivalent:
\begin{itemize}
\item[(i)] The Schur multiplier 
\(T_\phi: B(\ell^2_I) \to B(\ell^2_I)\) is 
bounded and admits an absolute dilation;
\item[(ii)] There exist a 
normalized tracial von Neumann algebra 
\((N, \sigma)\) and a family \((d_k)_{k \in I}\) of unitaries 
in \(N\) such that for every \(i, j \in I\), 
\(\phi(i,j) = \sigma(d_j^* d_i)\).
\end{itemize}
\end{theorem}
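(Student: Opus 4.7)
\emph{Direction (ii)$\Rightarrow$(i).} I construct the dilation as a Bernoulli-type shift on an infinite tensor product. With $(N,\sigma)$ and $(d_k)_{k\in I}$ as in (ii), let $\widetilde N := \overline{\bigotimes}_{k\in \Zdb}(N,\sigma)$ be the two-sided infinite tensor product, a normalized tracial von Neumann algebra, with its position-shift $*$-automorphism $S$, and let $\iota_m\colon N\hookrightarrow \widetilde N$ denote the embedding at position $m$. Set $M := B(\ell^2_I)\,\overline{\otimes}\,\widetilde N$ with the tensor trace and $J(x) := x\otimes 1$; then $\mathbb{E}_J = I\,\overline{\otimes}\,\widetilde\sigma$. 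Put $D := \sum_{i\in I} e_{ii}\otimes \iota_0(d_i)$, a unitary in $M$, and define $U := \mathrm{Ad}(D)\circ (I\,\overline{\otimes}\,S)$, a trace-preserving $*$-automorphism. A direct induction on $k$ yields
\[
U^k(e_{ij}\otimes 1) \,=\, e_{ij}\otimes \prod_{m=0}^{k-1}\iota_m(d_id_j^*),\qquad k\geq 1,
\]
whence $\mathbb{E}_J U^k J(e_{ij}) = \sigma(d_id_j^*)^k\, e_{ij} = \phi(i,j)^k\, e_{ij} = T_\phi^k(e_{ij})$, using traciality of $\sigma$. Normality extends this to $T_\phi^k = \mathbb{E}_J U^k J$ on all of $B(\ell^2_I)$.

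\emph{Direction (i)$\Rightarrow$(ii).} Let $(M,\tau_M,J,U)$ be the given absolute dilation and set $p_i := J(e_{ii})$, $N := p_1Mp_1$, $\sigma := \tau_M\vert_N$; since $\tau_M(p_1)=\mathrm{tr}(e_{11})=1$, the pair $(N,\sigma)$ is a normalized tracial von Neumann algebra. Because $J(B(\ell^2_I))\subseteq M$ is a unital subalgebra isomorphic to a type $I$ factor, the standard splitting theorem gives a $*$-isomorphism $M\simeq B(\ell^2_I)\,\overline{\otimes}\,N$ under which $J(x) = x\otimes 1$, $\tau_M = \mathrm{tr}\otimes\sigma$, and hence $\mathbb{E}_J = I\,\overline{\otimes}\,\sigma$. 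Put $f_{ij} := U(e_{ij}\otimes 1)$; the $f_{ij}$ form a system of matrix units in $M$, and the relation $T_\phi = \mathbb{E}_J U J$ reads $(I\,\overline{\otimes}\,\sigma)(f_{ij}) = \phi(i,j)\,e_{ij}$ for every $(i,j)\in I^2$.

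The key rigidity step is to show $f_{ii} = e_{ii}\otimes 1_N$. Writing $f_{ii} = \sum_{a,b}e_{ab}\otimes(f_{ii})_{ab}$, positivity of $f_{ii}$ makes the diagonal entries $(f_{ii})_{aa}$ positive in $N$, with $\sigma((f_{ii})_{aa})=\phi(i,i)\delta_{ai}=\delta_{ai}$ and $\sum_i(f_{ii})_{aa} = 1_N$. By faithfulness of $\sigma$, $(f_{ii})_{aa}=0$ for $a\neq i$; since a positive matrix with a vanishing diagonal entry has the corresponding row and column zero, $f_{ii}$ collapses to $e_{ii}\otimes(f_{ii})_{ii}$, and $(f_{ii})_{ii}$ is then a projection in $N$ of $\sigma$-value $1$, hence $1_N$. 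It follows that for $i\neq j$ the element $f_{ij}$ is a partial isometry with source $e_{jj}\otimes 1$ and range $e_{ii}\otimes 1$, which forces $f_{ij} = e_{ij}\otimes u_{ij}$ for a unitary $u_{ij}\in N$. The matrix-unit relations $f_{ij}f_{jk}=f_{ik}$ give the cocycle $u_{ij}u_{jk}=u_{ik}$; setting $d_k := u_{k1}$ yields $u_{ij}=d_id_j^*$, and the identity $(I\,\overline{\otimes}\,\sigma)(f_{ij})=\sigma(u_{ij})e_{ij}=\phi(i,j)e_{ij}$ delivers $\sigma(d_id_j^*)=\phi(i,j)$, equivalently $\sigma(d_j^*d_i)=\phi(i,j)$ by traciality.

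The principal obstacle is the rigidity step $f_{ii}=e_{ii}\otimes 1_N$: this is where the full strength of the hypotheses---$U$ being a $*$-homomorphism (not merely ucp), unitality $\phi(i,i)=1$, and faithfulness of $\sigma$---must be combined. A secondary point requiring care is the splitting $M\simeq B(\ell^2_I)\,\overline{\otimes}\,N$ when $I$ is infinite, but this is handled by the classical structure theorem for type $I$ subfactors with unit equal to $1_M$. Once the rigidity step is in hand, the remainder is routine matrix-unit bookkeeping.
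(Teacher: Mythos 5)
The paper does not prove this theorem at all: it is quoted from \cite[Theorem 1.1]{DL1} (with the finite-dimensional case attributed to \cite[Proposition 2.8]{HM}), so there is no internal proof to compare against. Your blind argument is, as far as I can check, correct and essentially reconstructs the known proof in the discrete setting. For (ii)$\Rightarrow$(i), the Bernoulli-shift dilation $U=\mathrm{Ad}(D)\circ(I\,\overline{\otimes}\,S)$ on $B(\ell^2_I)\,\overline{\otimes}\,\overline{\bigotimes}_{\mathbb Z}N$ is exactly the standard construction (cf.\ Ricard and \cite{DL1}), and the computation $U^k(e_{ij}\otimes 1)=e_{ij}\otimes\prod_{m=0}^{k-1}\iota_m(d_id_j^*)$ together with independence of the tensor factors gives $\mathbb{E}_JU^kJ(e_{ij})=\phi(i,j)^ke_{ij}$ as claimed; the only presentational quibble is that you should note at the outset that $[\sigma(d_j^*d_i)]$ is a Gram matrix with unit diagonal, so $T_\phi$ is a ucp (in particular bounded, normal) Schur multiplier before asserting $T_\phi^k=\mathbb{E}_JU^kJ$ by normality. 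For (i)$\Rightarrow$(ii), the splitting $M\simeq B(\ell^2_I)\,\overline{\otimes}\,N$ with $N=p_{i_0}Mp_{i_0}$ is the classical structure theorem for unital type $I$ subfactors and works for arbitrary $I$; your rigidity step (faithfulness of $\sigma$ forcing the off-corner entries of the projections $f_{ii}$ to vanish, then $(f_{ii})_{ii}=1_N$, hence $f_{ij}=e_{ij}\otimes u_{ij}$ with $u_{ij}$ unitary satisfying the cocycle relation) is precisely the Haagerup--Musat argument, correctly extended to infinite index sets, and only the case $k=1$ of the dilation identity is needed. The net effect is that your proposal supplies a self-contained proof of a statement the paper merely imports, at the cost of re-proving (in the special discrete case) a result whose general $\sigma$-finite measurable version in \cite{DL1} requires genuinely more machinery.
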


We also note that, on the one hand, any ucp Schur multiplier $T_\phi: B(\ell^2_I) \to B(\ell^2_I)$ is absolutely dilatable if $\phi$ is real-valued 
(see \cite[Theorem 4.2]{A}). On the other hand, 
not all ucp Schur multipliers admit an absolute 
dilation (see \cite[Example 3.2]{HM} and \cite[Remark 7.3]{DL1}).

Let $(\M,\tau_{\mathcal{M}})$ be a normalized tracial von Neumann algebra and let $T : \M \to \M$ be a normal ucp trace preserving map. We say that $T$ is factorizable if there exist another tracial von Neumann algebra \((M,\tau_M)\), and two normal unital trace preserving \(*\)-homomorphisms \(J, J' : \M \to M\), such that $T = {\mathbb E}_{J} \circ J'$. This definition goes back to \cite{AD}, 
where it is introduced in the more general context of von Neumann algebras equipped with a (non necessarily tracial) normal
faithful state (see also \cite[Remark 1.4, (a)]{HM}).

The equivalence result below follows 
from \cite[Theorem 4.4]{HM} (using Remark \ref{Normalized}).

\begin{lemma}\label{Equiv}
Let $T : \M\to \M$ be a normal ucp trace preserving map on a normalized tracial von
Neumann algebra. Then $T$ admits an absolute 
dilation if and only if $T$ is factorizable.
\end{lemma}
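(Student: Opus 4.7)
The plan is to prove the two implications separately, observing that one is essentially formal while the other is precisely the content of \cite[Theorem 4.4]{HM}.

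For the direction from absolute dilation to factorizability, I would argue as follows. Suppose $T$ admits an absolute dilation $(M, J, U)$ in the sense of Definition \ref{Def AD}. By Remark \ref{Normalized}, the dilating algebra $(M, \tau_M)$ is automatically normalized tracial, so the factorizability framework applies verbatim. I would then set $J' := U \circ J : \M \to M$; since $U$ is a trace preserving $*$-automorphism and $J$ is a normal unital trace preserving $*$-homomorphism, their composition $J'$ is also a normal unital trace preserving $*$-homomorphism from $\M$ to $M$. Specializing the dilation identity to $k=1$ yields $T = {\mathbb E}_J U J = {\mathbb E}_J \circ J'$, which is exactly the factorization required.

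For the converse, from factorizability to absolute dilation, I would invoke \cite[Theorem 4.4]{HM}. The broad idea of that result is to iterate the factorization $T = {\mathbb E}_J \circ J'$ by gluing together countably many copies of the dilating algebra $M$, bonded through $J$ and $J'$ in an alternating pattern, and taking a trace preserving inductive/tensor limit $(M_\infty, \tau_\infty)$ which remains normalized tracial. A shift-type map along this infinite system then produces a trace preserving $*$-automorphism $U_\infty$ of $M_\infty$, and the natural embedding of $\M$ at a distinguished base position, together with $U_\infty$, realizes the absolute dilation.

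The main obstacle I expect is establishing that the iterated shift yields $T^k = {\mathbb E}_\iota U_\infty^k \iota$ for \emph{every} $k \geq 0$, rather than only for $k=1$; this forces the bonding maps to be arranged so that each successive collapse of ${\mathbb E}_J \circ J' = T$ is cleanly reproduced at every level of the inductive system. This is the technical heart of \cite[Theorem 4.4]{HM}, and Remark \ref{Normalized} plays the supporting role of ensuring that the Haagerup--Musat framework, originally formulated in the normalized tracial setting, matches Definition \ref{Def AD} without further adjustment.
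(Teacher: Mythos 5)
Your proposal is correct and follows essentially the same route as the paper, which simply derives both directions from \cite[Theorem 4.4]{HM} together with Remark \ref{Normalized}; your explicit verification of the easy implication via $J' := U\circ J$ and the $k=1$ case of the dilation identity is a valid (and standard) way to make that direction self-contained.
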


We conclude this subsection by recalling two important results.  First, if $(\M, \tau_{\mathcal{M}})$ 
is an abelian normalized tracial von Neumann algebra, 
then any normal ucp trace preserving map on $\M$ is factorizable (see \cite[Remark 6.3(a)]{AD}). Second, if $G$ is a discrete group, then any ucp Fourier multiplier $T_\phi: VN(G) \to VN(G)$ is absolutely dilatable whenever $u$ is real-valued (see \cite[Corollary]{R}).  

We refer to \cite{AD, K, k1, R} for more results on factorizable maps.

\subsection{Ultraproducts}\label{Ultra}
We recall the construction of ultraproducts in the framework of normalized tracial von Neumann algebras. This construction goes back to McDuff \cite{McD}; for details, we refer to \cite[Section 11.5]{Pi}.

Let $(M_i, \sigma_i)_{i \in I}$ be a family of normalized tracial von Neumann algebras, and assume that $I$ is a directed set. Let $\U$ be an ultrafilter on $I$ refining the filter associated with the order on $I$. For any $i \in I$, let $\Vert \cdot \Vert_i$ denote the norm on $M_i$. Let
$$
\ell^\infty_I \{ M_i \} = \bigl\{ (y_i)_{i \in I} : y_i \in M_i \ \text{for all}\ i \in I \ \text{and} \ (\Vert y_i \Vert_i)_i \ \text{is bounded} \bigr\}
$$
be the direct sum of the $M_i$. Equipped with the norm $\Vert (y_i)_i \Vert = \sup_i \Vert y_i \Vert_i$ and the obvious product and involution, this is a $C^*$-algebra. Let
$$
Q = \bigl\{ (y_i)_i \in \ell_I^\infty \{ M_i \} : \|y_i\|_{L^2(M_i, \sigma_i)} \to 0 \bigr\}.
$$
Then $Q$ is a closed two-sided ideal of $\ell_I^\infty \{ M_i \}$. We may therefore define the quotient $C^*$-algebra
$$
M = \frac{\ell_I^\infty \{ M_i \}}{Q}.
$$

In the sequel, we let ${\rm cl}\bigl((y_i)_i\bigr)\in M$
denote the class of $(y_i)_i$ modulo $Q$, for any $(y_i)_i\in \ell^\infty_I\{M_i\}$. 
It turns out that  $M$ is actually a von Neumann 
algebra and that the functional $\sigma : M\to\Cdb$ defined
by
$$
\sigma\bigl({\rm cl}\bigl((y_i)_i\bigr)\bigr) := \lim_{\mathcal U} \sigma_i(y_i),\quad (y_i)_i\in \ell^\infty_I\{M_i\},
$$
is a well-defined normal tracial faithful state on $M$.
The resulting normalized tracial von Neumann algebra $(M,\sigma)$ 
is called the ultraproduct of $\{(M_i,\sigma_i)\}_{i\in I}$ along
$\U$.

\section{Absolute dilation of Fourier multipliers on 
discrete groups}\label{Sec 3}
In this section, we apply Theorem \ref{DL1}, 
along with a transference method, to describe absolutely dilatable Fourier multipliers on
amenable discrete groups. To achieve this,
we adapt a method from \cite{NR} to the 
context of absolute dilations.

\begin{theorem}\label{Transf}
Let \(G\) be a discrete 
group, and let \(u: G \to \mathbb{C}\) be a unital, positive
definite function (so that the Fourier
multiplier $M_u : VN(G)\to VN(G)$ is ucp).
Then the following assertions hold:
\begin{itemize}
\item[(i)] If \(M_u\) admits an absolute dilation, then the associated Schur multiplier \(T_{\phi_u}\) is absolutely dilatable.
\item[(ii)] If \(G\) is amenable and if \(T_{\phi_u}\) is absolutely dilatable, then \(M_u\) admits an absolute dilation. 
\end{itemize}
\end{theorem}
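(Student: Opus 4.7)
My plan is to handle both directions uniformly via Lemma \ref{Equiv}: because $G$ is discrete, $\tau_{VN(G)}$ is a state, so absolute dilatability of $M_u$ is equivalent to the existence of a factorization $M_u = {\mathbb E}_J \circ J'$ through two normal unital trace-preserving $*$-homomorphisms $J,J'\colon VN(G)\to (M,\tau_M)$ into a normalized tracial von Neumann algebra. On the Schur side, Theorem \ref{DL1} characterizes absolute dilatability of $T_{\phi_u}$ by the existence of unitaries $(d_s)_{s\in G}$ in some normalized tracial $(N,\sigma)$ with $u(st^{-1})=\sigma(d_t^*d_s)$. The strategy is therefore to convert between these two data.

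For part (i), from a factorization $M_u = {\mathbb E}_J\circ J'$ I set $d_s := J(\lambda(s))^* J'(\lambda(s)) \in M$; since $\lambda(s)$ is unitary in $VN(G)$ and $J,J'$ are unital $*$-homomorphisms, each $d_s$ is a unitary in $M$. Using the trace property of $\tau_M$ to cycle factors and multiplicativity of $J,J'$, one gets
\[
\tau_M(d_t^* d_s) = \tau_M\bigl(J(\lambda(ts^{-1}))\, J'(\lambda(st^{-1}))\bigr).
\]
The defining identity $\tau_M(J'(x)J(y))=\tau_{VN(G)}(M_u(x)y)$ of the conditional expectation ${\mathbb E}_J$, applied to $x=\lambda(st^{-1})$ and $y=\lambda(ts^{-1})=\lambda(st^{-1})^*$, then collapses this to $u(st^{-1})\,\tau_{VN(G)}(1)=u(st^{-1})$. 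Theorem \ref{DL1} produces the absolute dilation of $T_{\phi_u}$.

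For part (ii), Theorem \ref{DL1} provides $(N,\sigma)$ and unitaries $(d_s)$ with $u(st^{-1}) = \sigma(d_t^*d_s)$, and I carry out a F\o lner--ultraproduct transference in the spirit of \cite{NR}. For each finite $F\subset G$, let $M_F := M_{|F|}(\Cdb) \overline{\otimes} N$ with $M_{|F|}(\Cdb)$ indexed by $F\times F$ and equipped with the normalized trace $\tau_F := \frac{1}{|F|}{\rm tr} \otimes \sigma$, and set
\[
W_F := \sum_{s\in F} e_{s,s} \otimes d_s \in M_F,
\]
a unitary. Fix an ultrafilter $\U$ on the directed set of finite subsets of $G$ refining the order filter and form the tracial ultraproduct $(M_\U, \tau_\U)$; conjugation by ${\rm cl}\bigl((W_F)_F\bigr)$ defines a trace-preserving $*$-automorphism $U$ of $M_\U$. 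Next introduce the partial isometries
\[
\lambda_F(s) := \sum_{u \in F \cap s^{-1}F} e_{su,u} \in M_{|F|}(\Cdb),
\]
which are just $P_F\lambda(s)P_F$ read as matrices on $\ell^2(F)$. The F\o lner condition forces $\|\lambda_F(st)-\lambda_F(s)\lambda_F(t)\|_{L^2(\tau_F)}\to 0$ and $\|\lambda_F(s)^*\lambda_F(s)-1\|_{L^2(\tau_F)}\to 0$ along $\U$, so $\lambda(s)\mapsto{\rm cl}\bigl((\lambda_F(s)\otimes 1)_F\bigr)$ is a unital trace-preserving $*$-homomorphism on the group algebra $\Cdb[G]$; the compression bound $\|\sum_s a_s\lambda_F(s)\|\le\|\lambda(a)\|_{VN(G)}$ combined with trace preservation promotes it to a normal $*$-homomorphism $J\colon VN(G)\to M_\U$. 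Setting $J' := U \circ J$, I verify the factorization ${\mathbb E}_J \circ J' = M_u$ on $\Cdb[G]$ by computing $\tau_\U\bigl(J'(\lambda(s)) J(\lambda(t))^*\bigr)$ at the $F$-level: expanding the conjugation $W_F(\lambda_F(s)\otimes 1)W_F^*=\sum_{u\in F\cap s^{-1}F}e_{su,u}\otimes d_{su}d_u^*$ and using $\sigma(d_{su}d_u^*)=u(s)$ (trace property of $\sigma$ applied to $\sigma(d_t^*d_s)=u(st^{-1})$), the expression reduces to $\delta_{s,t}\,u(s)\,|F\cap s^{-1}F|/|F|$, whose F\o lner limit is exactly $\tau_{VN(G)}\bigl(\lambda(t)^*M_u(\lambda(s))\bigr)$. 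Lemma \ref{Equiv} then concludes.

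The main obstacle is the rigorous construction of the homomorphism $J$ in part (ii): the assignment $\lambda(s)\mapsto{\rm cl}\bigl((\lambda_F(s)\otimes 1)_F\bigr)$ is only asymptotically multiplicative and unitary-valued at the finite-$F$ level, so I rely on the F\o lner condition (i.e., amenability) to recover multiplicativity inside the ultraproduct, and on trace preservation together with the compression estimate above to control norms and thereby extend a $*$-homomorphism defined on the $w^*$-dense subalgebra $\Cdb[G]$ to a normal $*$-homomorphism on $VN(G)$. Once $J$ is in place, the remaining verifications are direct matrix and trace calculations.
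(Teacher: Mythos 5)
Your proof is correct. Part (i) takes a genuinely different route from the paper: the paper never invokes Lemma \ref{Equiv} or Theorem \ref{DL1} there, but instead builds an explicit absolute dilation of $T_{\phi_u}$ by conjugating with the unitary $V=(\lambda(t))_{t\in G}\in B(\ell^2_G)\overline{\otimes}VN(G)$, which yields the intertwining $(I\overline{\otimes}M_u)\circ\sigma=\sigma\circ T_{\phi_u}$ and lets one tensor the given dilation $(M,J,U)$ of $M_u$ with $B(\ell^2_G)$. Your argument converts the factorization $M_u={\mathbb E}_J\circ J'$ (available by Lemma \ref{Equiv}, since $\tau_G$ is a state when $G$ is discrete) into the unitaries $d_s=J(\lambda(s))^*J'(\lambda(s))$ with
$\tau_M(d_t^*d_s)=\tau_M\bigl(J'(\lambda(st^{-1}))J(\lambda(ts^{-1}))\bigr)=\tau_G\bigl(M_u(\lambda(st^{-1}))\lambda(ts^{-1})\bigr)=u(st^{-1})$,
and then applies Theorem \ref{DL1}; this is shorter, but it leans on the Haagerup--Musat equivalence, whereas the paper's tensor argument is self-contained and produces the dilation of $T_{\phi_u}$ explicitly. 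Part (ii) is essentially the paper's F\o lner--ultraproduct transference; the only substantive difference is that you again exploit Lemma \ref{Equiv}, so that only the one-step factorization $M_u={\mathbb E}_J(U\circ J)$ needs verifying, which lets you work with the concrete data $M_F=M_{|F|}(\Cdb)\overline{\otimes}N$ and $W_F=\sum_{s\in F}e_{s,s}\otimes d_s$ coming from Theorem \ref{DL1}, while the paper proves $M_u^k={\mathbb E}_JU^kJ$ for all $k$ using abstract absolute dilations of the restricted Schur multipliers. Your trace computations at each level $F$ (in particular $\sigma(d_{su}d_u^*)=u(s)$ and the factor $|F\cap s^{-1}F|/|F|\to 1$) are exactly those of the paper. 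The one point to write out with care is the construction of $J$: rather than defining it on $\Cdb[G]$ and extending, define $J(x)={\rm cl}\bigl((P_FxP_F\otimes 1)_F\bigr)$ for every $x\in VN(G)$ at once, observe that it is unital, positive and trace preserving, deduce normality from \cite[Proposition 5.4]{HJX}, and only then check multiplicativity on the generators $\lambda(s)$ via the F\o lner estimate, extending by linearity and normality --- this is how the paper closes that gap.
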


\begin{proof} (i) 
Assume that the Fourier multiplier
\(M_u: VN(G) \to VN(G)\) admits an absolute dilation. 
Then there exist a (normalized) tracial von Neumann algebra \((M, \Psi)\), a normal unital trace preserving \(\ast\)-homomorphism \(J: VN(G) \to M\), and a trace preserving 
$\ast$-automorphism \(U: M \to M\) such that for all \(k \geq 0\),
\[
        M_u^k = J_1^* U^k J.
 \]

We now appeal to Subsection \ref{SS-TP}.
We regard elements of $B(\ell^2_G) \overline{\otimes} VN(G)$
as matrices $[x_{st}]_{(s,t)\in G^2}$ with entries $x_{st}\in VN(G)$ 
in the usual way.
We will use the inclusion \( \ell^\infty_G \subseteq B(\ell^2_G) \) 
given by the diagonal action.
Consider the unitary  \(V = (\lambda(t))_{t \in G} \in 
\ell^\infty_G(VN(G))\), 
regarded as an element of \(B(\ell^2_G) \overline{\otimes} VN(G)\), 
via the identification 
\( \ell^\infty_G(VN(G)) \simeq \ell^\infty_G \overline{\otimes} VN(G) \), and the aforementioned 
inclusion \( \ell^\infty_G \subseteq B(\ell^2_G) \).
Next, define \(\sigma: B(\ell^2_G) \to B(\ell^2_G) \overline{\otimes} VN(G)\) by 
\[
\sigma(a) = V(a \otimes 1) V^*, \quad  a \in B(\ell^2_G).
\]
Then, \(\sigma\) is a normal unital \(\ast\)-homomorphism. Since
the Plancherel trace on discrete groups is normalized,
$\sigma$ is trace preserving. 
Expanding its action on a matrix 
$a = [a_{st}]\in B(\ell^2_G)$,
we obtain
\begin{align*}
\sigma(a) &= \operatorname{Diag}(\lambda(s)) [a_{st} \cdot 1] \operatorname{Diag}(\lambda(t^{-1})) \\
&= \left[a_{st}  \lambda(s)\lambda(t^{-1})\right] \\
&= \left[a_{st} \lambda(st^{-1})\right].
\end{align*}
This implies the intertwining relation
$$
(I \overline{\otimes} M_u) \circ \sigma = \sigma \circ T_{\phi_u}.
$$
Iterating this equation, 
we deduce that for all \(k \geq 0\),
\[
(I \overline{\otimes} M_u)^k \circ \sigma = 
\sigma \circ T_{\phi_u}^k.
\]
Hence, for all \(k\geq0\),
\[
\sigma_1^* \circ (I \overline{\otimes} M_u)^k \circ \sigma = T_ {\phi_u}^k.
\]
Thus we have the following commutative diagram

\[
\begin{tikzcd}[column sep=5em, row sep=5em, shorten <= 2pt, shorten >= 2pt]
B(\ell^2_G) \overline{\otimes} M \arrow[r, "(I \overline{\otimes} U)^k"] & B(\ell^2_G) \overline{\otimes} M \arrow[d, "(I \overline{\otimes} J)^*_1"] \\
B(\ell^2_G) \overline{\otimes} VN(G) \arrow{u}[swap]{I \overline{\otimes} J}  \arrow[r, "(I \overline{\otimes} M_u)^k"] & B(\ell^2_G) \overline{\otimes} VN(G) \arrow[d, "\sigma_1^*"]\\
B(\ell^2_G) \arrow[r, "T_{\phi_u}^k"] \arrow[u, "\sigma"'] & B(\ell^2_G) 
\end{tikzcd}
\]

Hence, \(T_{\phi_u}\) is absolutely dilatable.

\smallskip
(ii) Assume that \( T_{\phi_u} \) is absolutely dilatable. 
Since \( G \) is amenable, it admits a Følner net, which is a net 
\( (F_i)_{i\in I} \) of finite subsets of \( G \) 
satisfying the property that for every \( t \in G \),  
\begin{equation}\label{eq2}
\frac{|F_i \triangle tF_i|}{|F_i|} \to 0.
\end{equation}  
Equivalently, for every \( t \in G \),  
\begin{equation}\label{eq3}
\frac{|F_i \cap tF_i|}{|F_i|} \to 1.
\end{equation}
Here, $|F|$ denotes the cardinality of any subset $F$ of $G$.

For any \(i\in I\), let \(T_i: B(\ell^2_{F_i}) \to B(\ell^2_{F_i})\) 
be the restriction of \(T_{\phi_u}\). Applying Theorem \ref{DL1}, 
first to 
$T_{\phi_u}$ and then to $T_i$, 
we obtain that \(T_i\) is absolutely dilatable.
Let \(\tau_i = \frac{1}{\lvert F_i \rvert} \operatorname{tr}(\cdot)\) 
be the normalized trace on \(B(\ell^2_{F_i})\). 
Then, there exist a normalized tracial von Neumann algebra \((M_i, \sigma_i)\), a 
normal unital \(\ast\)-homomorphism 
\(J_i: B(\ell^2_{F_i}) \to M_i\) such that 
\(\sigma_i \circ J_i = \tau_i\), and a 
trace preserving \(\ast\)-automomorphism \(U_i: M_i \to M_i\) 
such that for every \(k \geq 0\),
\[
T_i^k = \mathbb{E}_i U_i^k J_i,
\]
where \(\mathbb{E}_i: M_i \to B(\ell^2_{F_i})\) is the conditional expectation associated with \(J_i\). Further, 
let \(\Gamma_i: \ell^2_{F_i} \to \ell^2_G\) be the canonical embedding, and define \(\alpha_i: VN(G) \to B(\ell^2_{F_i})\) by  
\[
\alpha_i(x) = \Gamma_i^* x \Gamma_i,\quad x\in VN(G).
\]
We will work with the diagram below.
\[
\begin{tikzcd}[column sep=3em, row sep=3em, shorten <= 2pt, shorten >= 2pt]
(M_i, \sigma_i) \arrow[r, "U_i^k"] 
& (M_i, \sigma_i) \arrow[d, "\mathbb{E}_i"] \\
  (B(\ell^2_{F_i}), \tau_i) \arrow[u, "J_i"'] \arrow[r, "T_i^k"] & (B(\ell^2_{F_i}), \tau_i) \\
  VN(G) \arrow[r, "M_u^k"] \arrow[u, "\alpha_i"'] & VN(G).
\end{tikzcd}
\]

Let \(\mathcal{U}\) 
be an ultrafilter on I refining the filter 
associated with the order
on I and  let $(M,\sigma)$ be the ultraproduct 
of $\{(M_i,\sigma_i)\}_{i\in I}$ along
$\U$.
We refer to Subsection \ref{Ultra} for this construction and we use the notation from there.

Let \((y_i)_{i \in I} \in \ell_I^\infty\{M_i\}\). Since 
each \(U_i\) is a trace preserving \(\ast\)-automorphism, we have  
\[
\sigma_i(U_i(y_i)^* U_i(y_i)) = \sigma_i(U_i(y_i^* y_i)) = \sigma_i(y_i^* y_i).
\]
Thus, if \((y_i)_{i \in I} \in Q\), then \((U_i(y_i))_{i\in I}
\in Q\). Consequently, we may define \(U : M \to M\) by  
\[
U\bigl(\mathrm{cl}\bigl((y_i)_i\bigr)\bigr) = 
\mathrm{cl}\bigl((U_i(y_i))_i\bigr).
\]
It is straightforward to verify that \(U\) is a 
trace preserving \(\ast\)-automorphism.

Let $(\epsilon_t)_{t\in G}$ denote the standard
basis of $\ell^2_G$ and let 
$(E_{s,t})_{(s,t)\in G^2}$ denote 
the matrix units of $B(\ell^2_G)$ corresponding to 
this basis. Then for any \(s \in G\), we have
$\lambda(s)\epsilon_q=\epsilon_{sq}$ for all
$q\in G$, hence
\[
\lambda(s) = \sum_{q \in G} E_{sq, q},
\]
where the summation is taken in the strong operator topology.
Hence for any $i\in I$,
\[
\alpha_i(\lambda(s)) = \sum_{q \in F_i \cap s^{-1} F_i} E_{sq, q}.
\]
Thus, \(\tau_i(\alpha_i(\lambda(s))) = 0\) if \(s \neq e_G\), and \(\tau_i(\alpha_i(\lambda(e_G))) = 1\). Since
$\alpha_i$ is normal, we deduce that 
\begin{equation}\label{TP}
\tau_i \circ \alpha_i = \tau_G.
\end{equation}

For any \(x \in VN(G)\), the 
family \((J_i \alpha_i(x))_{i\in I}\) is bounded. 
Hence, we may define a linear map \(J: VN(G) \to M\) by
\[
J(x) = \mathrm{cl}\bigl((J_i \alpha_i(x))_i\bigr), \quad 
x \in VN(G).
\]
It is clear that $J$ is unital and positive. Moreover, 
$$
\sigma\circ J=\tau_G.
$$
Indeed, for all \(x \in VN(G)\),
$$
\sigma(J(x)) = \lim_{\mathcal{U}} \sigma_i(J_i \alpha_i(x)) = \lim_{\mathcal{U}} \tau_i(\alpha_i(x)) = \tau_G(x),
$$
by (\ref{TP}). According to \cite[Proposition 5.4]{HJX},
this implies that \(J\) is normal.

Let us show that \(J\) is a \(\ast\)-homomorphism. It is clear that \(J(x)^* = J(x^*)\) for all \(x \in VN(G)\). Let \(s,t \in G\) and consider \(i \in I\). First, we have
\[
\alpha_i(\lambda(s)) \alpha_i(\lambda(t)) = \Gamma_i^* \lambda(s) \Gamma_i \Gamma_i^* \lambda(t) \Gamma_i.
\]
We note that
$$ 
\lambda(t) = \sum_{q \in G} E_{tq, q}, \quad
\Gamma_i \Gamma_i^* = \sum_{p \in F_i} E_{p,p}
\quad\hbox{and}\quad
\lambda(s) = \sum_{r \in G} E_{r, s^{-1} r}.
$$
Hence,
\[
\Gamma_i \lambda(s) \Gamma_i \Gamma_i^* \lambda(t) \Gamma_i = \left( \sum_{r \in F_i} E_{r, s^{-1} r} \right) \left( \sum_{p \in F_i} E_{p,p} \right) \left( \sum_{q \in F_i} E_{tq, q} \right).
\]
Arguing as above, we obtain
\[
\alpha_i(\lambda(s)) \alpha_i(\lambda(t)) = \sum_q E_{stq, q},
\]
where the sum runs over all \(q \in F_i\) such that \(tq \in F_i\) and \(stq \in F_i\), that is, 
$$
q \in F_i \cap t^{-1} F_i \cap t^{-1} s^{-1} F_i.
$$
Second, we have \(\alpha_i(\lambda(s)\lambda(t))=\Gamma_i^*\lambda(st)\Gamma_i\). 
Since \(\lambda(st)=\sum_{q\in G}E_{stq,q}\), we obtain 
$$
\Gamma_i^*\lambda(st)\Gamma_i=\sum_q E_{stq,q},
$$
where the sum runs over all \(q\in F_i\) 
such that \(stq\in F_i\), that is 
$$
q\in F_i\cap t^{-1}s^{-1}F_i. 
$$
Let \(H_i=F_i\cap t^{-1}s^{-1}F_i\). 
Combining the above two calculations, we obtain 
\[
\alpha_i(\lambda(s)\lambda(t))-\alpha_i(\lambda(s))\alpha_i(\lambda(t))=
\sum_{H_i\setminus H_i\cap t^{-1}F_i}E_{stq,q}. 
\]
Hence,
\[
\bigl\Vert\alpha_i(\lambda(s)\lambda(t))-\alpha_i(\lambda(s))\alpha_i(\lambda(t))
\bigr\Vert^2_{L^2
(B(\ell^2_{F_i}))}=
\frac{\lvert H_i\setminus H_i\cap t^{-1}F_i\rvert}{\lvert F_i\rvert}.
\]
Since 
$$
H_i\setminus H_i\cap t^{-1}F_i\subseteq F_i\setminus F_i\cap t^{-1}F_i\subseteq F_i\triangle t^{-1} F_i,
$$
we deduce that 
\[
\bigl\Vert\alpha_i(\lambda(s)\lambda(t))-
\alpha_i(\lambda(s))\alpha_i(\lambda(t))\bigr\Vert^2_{L^2(B(\ell^2_{F_i}))}
\leq\frac{\lvert F_i\triangle t^{-1}F_i\rvert}{\lvert F_i\rvert}.
\]
Since \(\sigma_i\circ J_i =\tau_i\) and $J_i$ is
a \(\ast\)-homomorphism, it preserves the \(L^2\)-norm. 
Hence we actually have
\[
\bigl\Vert J_i\alpha_i(\lambda(s)\lambda(t))-
J_i\alpha_i(\lambda(s))J_i\alpha_i(\lambda(t))
\bigr\Vert_{L^2(M_i)}^2\leq
\frac{\lvert F_i\triangle t^{-1}F_i\rvert}{\lvert F_i\rvert}.
\]
The upper bound goes to zero as \(i\to\infty\), by (\ref{eq2}). 
Consequently, 
$$
J_i \alpha_i(\lambda(s)\lambda(t)) - J_i \alpha_i(\lambda(s)) J_i \alpha_i(\lambda(t)) \in Q,
$$
and hence, \(J(\lambda(s)\lambda(t)) = J(\lambda(s)) J(\lambda(t))\). 
By linearity and normality of \(J\), we deduce as expected
that for every \(x, y \in VN(G)\),
we have
$J(xy) = J(x) J(y)$.

We now conclude the proof by showing that for every \(k \geq 0\),
\[
M_u^k = \mathbb{E}_J U^k J.
\]
By linearity and normality again, it suffices 
to show that for every \(t \in G\),
\[
\mathbb{E}_J U^k J(\lambda(t)) = u(t)^k \lambda(t).
\]
We fix \(t \in G\). For all \(s \in G\),
\begin{align*}
\langle \lambda(t), \lambda(s) \rangle_{VN(G), L^1(VN(G))} 
&= \tau_G(\lambda(t) \lambda(s)) \\
&= \tau_G(\lambda(ts)) = \begin{cases}
        1 & \text{if } s = t^{-1}, \\
        0 & \text{otherwise}.
    \end{cases}
\end{align*}
By density of \(\lambda(G)\) in \(L^1(VN(G))\), it therefore suffices to show that
\begin{equation}\label{eq4}
\langle U^k J(\lambda(t)), J(\lambda(s)) \rangle_{M, L^1(M)} = \begin{cases}
        u(t)^k & \text{if } s = t^{-1}, \\
        0 & \text{otherwise}.
\end{cases}
\end{equation}

To see this, recall that for any \(i \in I\) 
and any \(a \in B(\ell^2_{F_i})\), 
we have \((\mathbb{E}_i U_i^k J_i)(a) = T_i^k(a)\). Hence
for  any \(a,b \in B(\ell^2_{F_i})\),
$$
\langle \mathbb{E}_i U_i^k J_i(a), b \rangle= 
\langle T_i^k(a), b \rangle,
$$
and hence,
\[
\langle U_i^k J_i(a), J_i(b) \rangle= \langle T_i^k(a), b \rangle.
\]
Therefore,
\[
\sigma_i\bigl(U_i^k J_i(a) \cdot J_i(b)) = \langle T_i^k(a), b \rangle.
\]
Let us apply this with \(a = \alpha_i(\lambda(t))\) and \(b = \alpha_i(\lambda(s))\). Since \(\alpha_i(\lambda(t)) = \sum_{q \in F_i \cap t^{-1} F_i} E_{tq, q}\), 
we have
\[
T_i^k\bigl(
\alpha_i(\lambda(t))\bigr) 
= \sum_{q \in F_i \cap t^{-1} F_i} u(t)^k E_{tq, q}.
\]
Consequently,
\begin{align*}
T_i^k\bigl(\alpha_i(\lambda(t))\bigr)
\alpha_i(\lambda(s)) &= u(t)^k 
\left( \sum_{q \in F_i \cap t^{-1} F_i} E_{tq, q} \right) 
\left( \sum_{r \in F_i 
\cap s^{-1} F_i} E_{sr, r} \right) \\
&= u(t)^k \sum_{r \in F_i \cap s^{-1} F_i \cap 
s^{-1} t^{-1} F_i} E_{tsr, r}.
\end{align*}
If \(s \neq t^{-1}\), then \(ts \neq e_G\). Hence, 
\({\rm tr}(E_{tsr, r}) = 0\), for all \(r\), so
$$
\bigl\langle T_i^k\bigl(\alpha_i(\lambda(t))\bigr), 
\alpha_i(\lambda(s)) \bigr\rangle
=\tau_i
\bigl(T_i^k\bigl(\alpha_i(\lambda(t))\bigr) 
\alpha_i(\lambda(s)))\bigr) = 0.
$$
Hence, 
$$
\sigma_i\bigl(U_i^k J_i \alpha_i(\lambda(t)) \cdot J_i \alpha_i(\lambda(s))\bigr) = 0.
$$
In the case \(s = t^{-1}\), we obtain
\[
T_i^k\bigl(\alpha_i(\lambda(t))\bigr) \alpha_i(\lambda(t^{-1})) = u(t)^k \sum_{r \in F_i \cap t F_i} E_{r,r},
\]
hence
\[
\tau_i\bigl(T_i^k\bigl(\alpha_i(\lambda(t))\bigr)
\alpha_i(\lambda(t^{-1}))\bigr) = u(t)^k \,
\frac{|F_i \cap t F_i|}{|F_i|},
\]
and hence 
$$
\sigma_i\bigl(U_i^k J_i 
\alpha_i(\lambda(t)) \cdot J_i 
\alpha_i(\lambda(t^{-1}))\bigr) = u(t)^k\, 
\frac{|F_i \cap t F_i|}{|F_i|}.
$$

For any \(t, s \in G\), we have
\begin{align*}
\langle U^k J(\lambda(t)), J(\lambda(s)) \rangle_{M, L^1(M)}
&= \sigma\left(U^k J(\lambda(t)) \cdot J(\lambda(s))\right) \\
&= \lim_{\mathcal{U}} \sigma_i\bigl(U_i^k J_i 
\alpha_i(\lambda(t)) \cdot J_i 
\alpha_i(\lambda(s))\bigr).
\end{align*}
By the preceding calculation, this is equal to zero if 
\(s \neq t^{-1}\), and
\[
\langle U^k J(\lambda(t)), J(\lambda(t^{-1})) \rangle_{M, L^1(M)} = u(t)^k \,\lim_{\mathcal{U}} \frac{|F_i \cap t F_i|}{|F_i|}.
\]
By \eqref{eq3}, the limit on the right-hand side is equal to 
$u(t)^k$. Hence, \eqref{eq4} is proved.\qedhere
\end{proof}

\begin{corollary}
Let $G$ be a discrete amenable group and let 
$u\colon G\to \Cdb$ be a unital, positive definite function.
The following assertions are
equivalent:
\begin{itemize}
\item [(i)] The Fourier multiplier $M_u : VN(G)\to VN(G)$ 
admits an absolute dilation;
\item [(ii)] The Fourier multiplier $M_u : VN(G)\to VN(G)$ is factorizable;
\item [(iii)] There exist a normalized tracial von Neumann algebra 
\((N, \sigma)\) and a family \((d_t)_{t\in G}\) of unitaries 
in \(N\) such that for every $s,t\in G$, $u(st^{-1})= 
\sigma(d_s^* d_t)$.
\end{itemize}
\end{corollary}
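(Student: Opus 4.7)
The corollary is essentially a recombination of results already established in the paper, so the plan is to reduce everything to Theorem~\ref{Transf} and Theorem~\ref{DL1}, with Lemma~\ref{Equiv} handling the passage to factorizability.

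First, I would dispatch the equivalence (i)$\Leftrightarrow$(ii). Since $G$ is discrete, the Plancherel trace $\tau_G$ on $VN(G)$ is a normalized trace (i.e.\ a faithful normal tracial state), so $(VN(G),\tau_G)$ is a normalized tracial von Neumann algebra. The multiplier $M_u$ is normal, ucp (by hypothesis on $u$), and trace preserving by the remark following Remark~\ref{Normalized}. Hence Lemma~\ref{Equiv} directly yields that $M_u$ admits an absolute dilation if and only if $M_u$ is factorizable.

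Next, for (i)$\Leftrightarrow$(iii), I would chain together the two main tools. For (i)$\Rightarrow$(iii): assuming $M_u$ admits an absolute dilation, part (i) of Theorem~\ref{Transf} gives that the Herz--Schur multiplier $T_{\phi_u}$ on $B(\ell^2_G)$ is absolutely dilatable. Then Theorem~\ref{DL1} applied to the family $\phi_u$ (with index set $I=G$) produces a normalized tracial von Neumann algebra $(N,\sigma)$ and unitaries $(d_t)_{t\in G}\subset N$ such that
\[
u(st^{-1})=\phi_u(s,t)=\sigma(d_t^*d_s),\qquad s,t\in G.
\]
A harmless relabeling (or the trace property together with $u(st^{-1})=\overline{u(ts^{-1})}$ and the Hermitian symmetry of the Gram matrix) converts this into the form $u(st^{-1})=\sigma(d_s^*d_t)$ stated in (iii). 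Conversely, if (iii) holds, Theorem~\ref{DL1} immediately gives that $T_{\phi_u}$ is absolutely dilatable on $B(\ell^2_G)$, and then part (ii) of Theorem~\ref{Transf}, which is where the amenability hypothesis on $G$ is crucially used, produces an absolute dilation of $M_u$. This closes the circle.

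There is essentially no hard step left to carry out: the substantive content has been absorbed into Theorem~\ref{Transf} (the F{\o}lner--ultraproduct construction in its part (ii)) and Theorem~\ref{DL1} (the Gram-matrix characterization from \cite{DL1}). The only minor point to be careful about is matching the indexing convention between $\phi_u(s,t)=u(st^{-1})$ and the form $\sigma(d_j^*d_i)$ appearing in Theorem~\ref{DL1}; this is a purely notational reindexing using the trace property of $\sigma$ and unitarity of the $d_t$'s.
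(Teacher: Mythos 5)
Your proposal is correct and follows exactly the paper's route: the paper's proof is literally ``Combine Theorem \ref{Transf}, Theorem \ref{DL1} and Lemma \ref{Equiv}'', and you have simply spelled out that combination, including the (genuinely harmless) reindexing between $\sigma(d_j^*d_i)$ and $\sigma(d_s^*d_t)$, which is handled by replacing each $d_t$ with $d_t^*$ and using the trace property of $\sigma$.
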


\begin{proof}
Combine Theorem \ref{Transf}, Theorem \ref{DL1} and Lemma \ref{Equiv}.
\end{proof}

We do not know if property (ii) in Theorem \ref{Transf} holds 
in the non amenable case. That is, we do not know 
if the two conditions 
``$M_u$ admits an absolute dilation" and ``$T_{\phi_u}$
admits an absolute dilation" are equivalent on any discrete group.

\section{Absolute dilation on non discrete groups}
A natural question is whether Theorem \ref{Transf} can be extended to non discrete groups. In this brief section, we provide two partial answers to this question: first for abelian groups, and second for compact groups.

\begin{theorem}\label{abgp}
Let \( G \) be an abelian locally compact 
group whose dual group $\widehat{G}$ is $\sigma$-compact.
Let $u: G\to \Cdb$ be a unital, positive definite continuous function.
Then the Fourier multiplier
\( M_u: VN(G) \to VN(G) \) admits an absolute dilation.
\end{theorem}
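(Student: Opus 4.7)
The proof strategy is Pontryagin duality combined with a classical Rokhlin-type skew-product dilation. Since $u$ is continuous, unital and positive definite on the abelian LCA group $G$, Bochner's theorem yields a Borel probability measure $\mu$ on the dual group $\widehat G$ such that $u(t)=\int_{\widehat G}\chi(t)\,d\mu(\chi)$ for every $t\in G$. The $\sigma$-compactness hypothesis ensures that the Haar measure $m$ on $\widehat G$ is $\sigma$-finite, so that $L^\infty(\widehat G)$, equipped with the trace $\int\cdot\,dm$, is a semifinite tracial von Neumann algebra. Under the Plancherel isomorphism $\F:VN(G)\to L^\infty(\widehat G)$, which is trace preserving and sends $\lambda(t)$ to the character $e_t:\chi\mapsto\chi(t)$, the Fourier multiplier $M_u$ is conjugate to the convolution operator $T_\mu f(\chi)=\int_{\widehat G} f(\chi\chi')\,d\mu(\chi')$; indeed one checks immediately that $T_\mu e_t=u(t)\,e_t$. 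It therefore suffices to build an absolute dilation of $T_\mu$ on $L^\infty(\widehat G)$.

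I would carry this out using the $\mu$-random walk on $\widehat G$. Let $\Omega=\widehat G^{\mathbb Z}$ carry the product probability $\nu=\mu^{\otimes\mathbb Z}$, let $S$ denote the left shift on $\Omega$, and put $(M,\sigma)=(L^\infty(\widehat G\times\Omega),\,m\otimes\nu)$, which, by $\sigma$-finiteness of $m$, is canonically identified with the tracial von Neumann algebra $L^\infty(\widehat G)\overline{\otimes}L^\infty(\Omega)$. Let $U$ be the $*$-automorphism of $M$ induced by the invertible measurable skew-product map $\widetilde U(\chi,\omega)=(\chi\omega_0,S\omega)$, and let $J:L^\infty(\widehat G)\to M$ be the canonical inclusion $f\mapsto f\otimes 1$. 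A Fubini argument gives that $\widetilde U$ preserves $m\otimes\nu$ (fiberwise over $\omega$ it is translation on $\widehat G$, which preserves $m$; projected onto $\Omega$, $S$ preserves $\nu$), so $U$ is trace preserving. The map $J$ is plainly a normal, unital, trace preserving $*$-homomorphism, and its associated conditional expectation $\mathbb E_J$ is integration in the $\omega$-variable against $\nu$.

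The rest is a short computation. By an easy induction one finds $U^k J(f)(\chi,\omega)=f(\chi\,\omega_0\omega_1\cdots\omega_{k-1})$ for $k\geq 0$; since under $\nu$ the product $\omega_0\cdots\omega_{k-1}$ has distribution $\mu^{*k}$, Fubini gives $\mathbb E_J U^k J(f)=f*\mu^{*k}=T_\mu^k f$. Transporting back through $\F$ gives the desired absolute dilation of $M_u$, with dilating algebra $(M,\sigma)$, inclusion $J\circ\F$, and automorphism $U$. The only genuine obstacle is measure-theoretic bookkeeping: one must justify the canonical identification $L^\infty(\widehat G)\overline{\otimes}L^\infty(\Omega)\simeq L^\infty(\widehat G\times\Omega)$ as semifinite tracial von Neumann algebras and verify measure-preservation of $\widetilde U$. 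Both require $\sigma$-finiteness of $m$, which is precisely what the $\sigma$-compactness of $\widehat G$ supplies.
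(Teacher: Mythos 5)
Your proposal is correct and follows essentially the same route as the paper: Bochner/positivity gives the convolution representation of $M_u$ on $L^\infty(\widehat G)$, and the dilation is the skew-product (random walk) automorphism on $L^\infty(\widehat G\times\widehat G^{\,\mathbb Z}, m\otimes\mu^{\otimes\mathbb Z})$ with $J(f)=f\otimes 1$; the paper's $U=\pi\circ\sigma$ is exactly your $\widetilde U(\chi,\omega)=(\chi\omega_0,S\omega)$. The measure-theoretic points you flag ($\sigma$-finiteness of Haar measure on $\widehat G$ and trace preservation of the skew product) are handled in the paper in the same way.
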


\begin{proof}
Let $dt$ denote the Haar measure on $\widehat{G}$. We will use the
fact that this measure is $\sigma$-finite (see e.g. \cite[Lemma 3.2]{AKLZ}). We recall the classical identification
$$
VN(G)\simeq L^\infty\bigl(\widehat{G}\bigr),
$$
in which $\tau_G$ corresponds to the integral with respect to $dt$.
Let $T\colon L^\infty\bigl(\widehat{G}\bigr)\to L^\infty\bigl(\widehat{G}\bigr)$ corresponding to $M_u$
in this identification.
Since $T$ is unital and positive,
there exists a probability measure $\mu$ on 
$\widehat{G}$ such that $T=\mu\ast\,\cdotp\,$ (=
convolution by $\mu$).

Consider the (infinite) product $\widehat{G}^{\,\mathbb Z}$, 
and let $\nu$ be the probability measure on that space
defined as  the infinite tensor product of $\mu$ over $\Zdb$.
Then we set
$$
M:=L^\infty\Bigl(\widehat{G}\times\widehat{G}^{\,\mathbb Z}, dt\otimes d\nu(s)\Bigr),
$$
and we equip $M$ 
with the (normal, faithful, semifinite) trace
given by the integral with respect to 
$dt\otimes d\nu(s)$. Here we use that $dt$ is $\sigma$-finite.

We let $\sigma\colon M\to M$ be defined by
$$
\bigl[\sigma(F)\bigr]\bigl(t,(s_k)_{k\in{\mathbb Z}}\bigr) 
=F\bigl(t, (s_{k+1})_{k\in{\mathbb Z}}\bigr),
\quad F\in M.
$$
Clearly, $\sigma$ is a unital trace preserving $\ast$-automorphism.

Next, let $\pi\colon M\to M$ be defined as follows. For any 
$h\in M$,
$$
[\pi(h)](t,s) = h(t -s_0,s),\qquad t\in\widehat{G},\ s=(s_k)_{k\in{\mathbb Z}}\in \widehat{G}^{\,\mathbb Z}.
$$
Then $\pi$ is a $\ast$-homomorphism 
and it readily follows from the definition of the 
Haar measure that $\pi$
is trace preserving. We set 
$$
U=\pi\circ \sigma.
$$
Then $U$ is a trace preserving $*$-automorphism of $M$.

We let $J\colon L^\infty\bigl(\widehat{G}\bigr)\to M$ be defined by 
$$
J(f)=f\otimes 1 : \bigl(t, (s_{k+1})_{k\in{\mathbb Z}}\bigr)
\mapsto f(t),\quad f\in L^\infty\bigl(\widehat{G}\bigr).
$$
This is a normal unital and
trace preserving $*$-homomorphism. Let $\Edb_J\colon M\to  L^\infty\bigl(\widehat{G}\bigr)$ be the conditional expectation 
associated with $J$. 
It is plain that we have
\begin{equation}\label{E}
[\Edb_J(h)](t)= \int_{\widehat{G}^{\,\mathbb Z}} h(t,s)\, d\nu(s),\qquad h\in
L^\infty\bigl(\widehat{G}\times\widehat{G}^{\,\mathbb Z}, dt\otimes d\nu(s)\bigr),\ t\in\widehat{G}.
\end{equation}

Now let $f\in L^\infty\bigl(\widehat{G}\bigr)$. Then 
$[UJ(f)](t,s)=f(t-s_0)$ for $t\in \widehat{G}$ and $s=(s_k)_k\in \widehat{G}^{\,\mathbb Z}$. Hence
$$
[\sigma UJ(f)](t,s) = f(t-s_1),\qquad 
t\in \widehat{G}, \ s\in \widehat{G}^{\,\mathbb Z},
$$
and hence 
$$
[U^2J(f)](t,s)=f(t-s_0-s_1),\qquad 
t\in \widehat{G}, \ s\in \widehat{G}^{\,\mathbb Z}.
$$
By induction we obtain that for any integer $k\geq 0$,
$$
[U^kJ(f)](t,s)=f(t-s_0-s_1-\cdots-s_{k-1}),\qquad 
t\in \widehat{G}, \ s\in \widehat{G}^{\,\mathbb Z}.
$$
Applying (\ref{E}), we deduce that for $t\in\widehat{G}$, we have
$$
[\Edb_J U^k J(f)](t)
=\int_{\widehat{G}^{\{0,\ldots,k-1\}}} 
f(t-s_0-s_1-\cdots -s_{k-1})\, d\mu(s_{k-1})\cdots d\mu(s_0).
$$
Hence we obtain that
$$
\Edb_JU^kJ(f) = T^k(f).
$$
\end{proof}

Our next goal is to extend part (ii) of 
Theorem \ref{Equiv} to compact groups.

\begin{lemma}\label{Trace}
Let \( G \) be a compact group. 
The canonical embedding of 
\( VN(G) \) into 
\( B(L^2(G)) \) is trace 
preserving.
\end{lemma}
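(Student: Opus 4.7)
The plan is to invoke the uniqueness statement for the Plancherel trace recorded right after (\ref{Plancherel}). Concretely, I will verify that the restriction of the standard trace $\mathrm{tr}$ on $B(L^2(G))$ to $VN(G)$ is a normal semifinite faithful trace satisfying the same defining identity as $\tau_G$, and then conclude by uniqueness that this restriction equals $\tau_G$, which is exactly the assertion that the canonical embedding is trace preserving.

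The heart of the argument is a kernel computation. Since $G$ is unimodular, a change of variables $s\mapsto tu^{-1}$ gives
\[
[\lambda(f)h](t) = \int_G f(tu^{-1})h(u)\, du,\quad h\in L^2(G),
\]
so for $f\in L^1(G)\cap L^2(G)$, the operator $\lambda(f)$ is an integral operator with kernel $K(t,u)=f(tu^{-1})$. Using Fubini and the invariance of Haar measure under translation and inversion, one computes $\norm{K}^2_{L^2(G\times G)} = |G|\cdot \norm{f}_{L^2}^2$. After normalizing Haar measure on the compact group $G$ so that $|G|=1$ (the standard convention for compact groups), this yields
\[
\mathrm{tr}(\lambda(f)^*\lambda(f)) = \norm{\lambda(f)}_{HS}^2 = \norm{f}_{L^2}^2
\]
for every $f\in L^1(G)\cap L^2(G)$, which is precisely the identity (\ref{Plancherel}) characterizing $\tau_G$.

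It then remains to check that $\mathrm{tr}|_{VN(G)}$ is indeed a normal semifinite faithful trace on $VN(G)$. Normality and faithfulness are inherited from $\mathrm{tr}$ on $B(L^2(G))$. For semifiniteness, the previous step shows that every $\lambda(f)$ with $f\in L^2(G)$ satisfies $\mathrm{tr}(\lambda(f)^*\lambda(f))<\infty$, and the linear span of these elements is $\sigma$-weakly dense in $VN(G)$: this follows from the $\sigma$-weak density of $\lambda(L^1(G))$ in $VN(G)$ together with the inclusion $L^2(G)\subseteq L^1(G)$, valid because $G$ is compact. The uniqueness assertion following (\ref{Plancherel}) then forces $\mathrm{tr}|_{VN(G)} = \tau_G$.

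The only mildly delicate point is the normalization of the Haar measure: the equality $\mathrm{tr}|_{VN(G)} = \tau_G$ holds only when $|G|=1$. This is the natural convention for compact groups and the one implicitly in force throughout the paper, so no obstacle arises from it.
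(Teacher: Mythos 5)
Your proposal is correct and follows essentially the same route as the paper: both compute that $\lambda(f)$ is a Hilbert--Schmidt operator with kernel $f(tu^{-1})$, use the normalization of the Haar measure on the compact group to get $\mathrm{tr}(\lambda(f)^*\lambda(f))=\norm{f}_{L^2}^2$, and then conclude via the Plancherel identity (\ref{Plancherel}) and its uniqueness statement. Your version merely spells out the final uniqueness/density step (normality, faithfulness, semifiniteness of $\mathrm{tr}|_{VN(G)}$) a bit more explicitly than the paper does.
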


\begin{proof}
We let $du$ denote the Haar measure on $G$.
Let \(f \in L^2(G)\). Using the fact that $L^2(G)\subseteq L^1(G)$,
we may write, for all \(g \in L^2(G)\),
\[
[\lambda(f)g](t) = \int_G f(tu^{-1})g(u) \, du.
\]
Hence, 
\([\lambda(f) g](t) = \int_G F(u, t) g(u) \, du\), where \(F(u, t) := f(tu^{-1})\). We observe that 
$$
\|F\|_{L^2(G \times G)} = \|f\|_{L^2(G)},
$$
because the Haar measure on a compact group is a probability measure. Applying e.g.  \cite[Theorem VI. 23]{RS}, this implies that \(\lambda(f) \in S^2(L^2(G))\), with \(\|\lambda(f)\|_{S^2} = \|f\|_{L^2}\).
Thus,
\[
\text{tr}(\lambda(f)^* \lambda(f)) 
= \|f\|_{L^2}^2.
\]
Applying (\ref{Plancherel}), we deduce that 
$$
\text{tr}(\lambda(f)^* \lambda(f))
=\tau_G(\lambda(f)^* \lambda(f)),
\quad f\in L^2(G).
$$
We deduce that \(\tau_G\) is the restriction of \(\text{tr}\) 
through the embedding \(VN(G) \hookrightarrow B(L^2(G))\).
\end{proof}

\begin{proposition}\label{cpgp}
Let \( G \) be a compact group. 
Let $u: G\to \Cdb$ be a unital, positive definite continuous function.
If the Schur multiplier \( T_{\phi_u} \) has an absolute dilation, then the Fourier multiplier \( M_u \) also admits an absolute dilation.
\end{proposition}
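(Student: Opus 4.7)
The plan is to obtain an absolute dilation of $M_u$ by restricting a given absolute dilation of $T_{\phi_u}$ along the canonical inclusion $\iota\colon VN(G)\hookrightarrow B(L^2(G))$. The crucial enabling fact is Lemma \ref{Trace}, which asserts that this inclusion is trace preserving; without it, the restriction argument would collapse, so it is precisely this lemma that makes the compactness hypothesis relevant.

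Concretely, I would start from the hypothesis: there exist a tracial von Neumann algebra $(M,\tau)$, a normal unital trace preserving $*$-homomorphism $\widetilde{J}\colon B(L^2(G)) \to M$, and a trace preserving $*$-automorphism $U\colon M \to M$ such that $T_{\phi_u}^k = \mathbb{E}_{\widetilde{J}} U^k \widetilde{J}$ for every $k\geq 0$. I would then set $J := \widetilde{J}\circ\iota\colon VN(G) \to M$. By Lemma \ref{Trace}, $\iota$ is trace preserving, hence so is $J$; unitality and the $*$-homomorphism property are automatic from composition. This produces candidate data $(M,\tau,J,U)$ for an absolute dilation of $M_u$.

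The remaining verification is the identity $M_u^k = \mathbb{E}_J U^k J$ for $k\geq 0$, which rests on three observations. First, since $J = \widetilde{J}\iota$, the induced $L^1$-level maps satisfy $J_1 = \widetilde{J}_1 \iota_1$, and by duality $\mathbb{E}_J = \mathbb{E}_\iota \circ \mathbb{E}_{\widetilde{J}}$, where $\mathbb{E}_\iota\colon B(L^2(G)) \to VN(G)$ is the conditional expectation associated with $\iota$. Second, the Herz-Schur multiplier restricts to the Fourier multiplier along $\iota$, i.e.\ $T_{\phi_u}\circ\iota = \iota\circ M_u$. This can be established by a kernel-level calculation on the dense family $\{\lambda(f):f\in L^1(G)\cap L^2(G)\}\subset VN(G)$: such operators have integral kernel $(s,t)\mapsto f(st^{-1})$, the Herz-Schur multiplier multiplies this kernel pointwise by $\phi_u(s,t)=u(st^{-1})$, and the resulting kernel is that of $\lambda(uf)=M_u(\lambda(f))$; density and normality then extend the identity to all of $VN(G)$. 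Third, $\mathbb{E}_\iota$ is the identity on the image of $VN(G)$, by the usual conditional expectation property applied with $y=1$. Combining these, for $x\in VN(G)$,
\[
\mathbb{E}_J U^k J(x) \;=\; \mathbb{E}_\iota\mathbb{E}_{\widetilde{J}} U^k \widetilde{J}\iota(x) \;=\; \mathbb{E}_\iota\bigl(T_{\phi_u}^k(\iota(x))\bigr) \;=\; \mathbb{E}_\iota\bigl(\iota(M_u^k(x))\bigr) \;=\; M_u^k(x).
\]

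The argument is conceptually short, and the only point that requires genuine work is the compatibility $T_{\phi_u}\circ\iota = \iota\circ M_u$; for discrete groups this is essentially built into the framework (cf.\ Lemma \ref{HS}), but in the compact setting it demands the kernel-level verification sketched above. Everything else—the factorization of conditional expectations, the identity property of $\mathbb{E}_\iota$ on $VN(G)$, and the trace preservation of $\iota$—is immediate once Lemma \ref{Trace} is in hand.
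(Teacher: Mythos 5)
Your proposal is correct and follows exactly the paper's route: the paper's proof consists of the two observations that $M_u$ is the restriction of $T_{\phi_u}$ to $VN(G)$ and that, by Lemma \ref{Trace}, one may compose the given absolute dilation of $T_{\phi_u}$ with the trace preserving embedding $VN(G)\hookrightarrow B(L^2(G))$. You have merely written out the details (the factorization $\mathbb{E}_J=\mathbb{E}_\iota\circ\mathbb{E}_{\widetilde{J}}$, the identity $\mathbb{E}_\iota\circ\iota=\mathrm{id}$, and the kernel-level verification of $T_{\phi_u}\circ\iota=\iota\circ M_u$) that the paper leaves implicit.
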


\begin{proof}
Note that $M_u$ is the restriction of $T_{\phi_u}$ to $VN(G)$.
Hence using Lemma \ref{Trace}, it suffices to compose 
the absolute dilation
of $T_{\phi_u}$ by the embedding $VN(G)\hookrightarrow B(L^2(G))$
to obtain an absolute dilation of $M_u$.
\end{proof}

\section{Counterexample: A ucp
Fourier multiplier without absolute dilation}\label{Sec4}
In this section, we construct a ucp Fourier multiplier that does not admit an absolute dilation. We focus on the symmetric group \( \S_3 \). Our goal is to define a function \( u\colon \S_3\to\mathbb{C} \) such that the map \( M_u\colon VN(\S_3)\to VN(\S_3) \) is ucp, while the associated Herz-Schur multiplier is not factorizable. Applying Lemma \ref{Equiv} and Theorem \ref{Transf}~(i), we conclude that \( M_u \) has no absolute dilation.

Our approach relies on the criterion established 
in \cite[Corollary 2.3]{HM} recalled below.

\begin{proposition}\label{Prop 5.1}
Let $n\geq 1$ be an integer, let us equip
$M_n(\Cdb)$ with 
the normalized trace $\tau_n$ and let 
\(T:M_n(\mathbb{C})\to M_n(\mathbb{C})\) be a ucp
trace preserving map of the form
\[
T(x)=\sum_{k=1}^d a_k^* x a_k,\quad x\in M_n(\mathbb{C}),
\]
where \(a_1,\ldots, a_d\in M_n(\mathbb{C})\). 
If \(d\geq2\) and the family 
\[
\{a_k^*a_l:1\leq k,l\leq d\}
\]
is linearly independent, then \(T\) is not factorizable.
\end{proposition}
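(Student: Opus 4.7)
The plan is to invoke the unitary implementation of factorizable Markov maps from \cite{HM}: any factorizable ucp trace preserving map $T\colon M_n(\Cdb)\to M_n(\Cdb)$ admits a representation
\[
T(x)=(\mathrm{id}_{M_n(\Cdb)}\otimes\tau_{\N})\bigl(u^*(x\otimes 1)u\bigr),
\]
for some normalized tracial von Neumann algebra $(\N,\tau_{\N})$ and some unitary $u\in M_n(\Cdb)\overline{\otimes}\N$. Assuming $T$ is factorizable, I will derive a contradiction from $d\geq 2$.

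First I observe that the hypothesis that $\{a_k^*a_l\}_{k,l=1}^d$ is linearly independent forces $\{a_k\}_{k=1}^d$ to be linearly independent: if $\sum_k c_k a_k=0$, then $\sum_k \overline{c_k}\,a_k^*a_l=0$ for every $l$, so each $c_k=0$. I then complete $a_1,\ldots,a_d$ to a basis $a_1,\ldots,a_{n^2}$ of $M_n(\Cdb)$ and write $u=\sum_{k=1}^{n^2}a_k\otimes v_k$ for uniquely determined $v_k\in\N$, which yields
\[
T(x)=\sum_{k,l=1}^{n^2}\tau_{\N}(v_k^*v_l)\,a_k^*xa_l.
\]
Since $\{a_k\}_{k=1}^{n^2}$ is a basis of $M_n(\Cdb)$, the linear maps $x\mapsto a_k^*xa_l$ form a basis of $B(M_n(\Cdb))$. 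Comparing with $T(x)=\sum_{k=1}^d a_k^*xa_k$ yields $\tau_{\N}(v_k^*v_l)=\delta_{kl}$ for $1\leq k=l\leq d$ and $\tau_{\N}(v_k^*v_l)=0$ otherwise. Faithfulness of $\tau_{\N}$ then forces $v_k=0$ for $k>d$, so that $u=\sum_{k=1}^d a_k\otimes v_k$.

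The key step exploits the unitarity relation $u^*u=1\otimes 1_{\N}$, which expands to
\[
\sum_{k,l=1}^d a_k^*a_l\otimes v_k^*v_l=1\otimes 1_{\N}.
\]
Here the hypothesis enters decisively: since $\{a_k^*a_l\}_{k,l=1}^d$ is linearly independent in $M_n(\Cdb)$, I complete it to a basis of $M_n(\Cdb)$ and match coefficients in the tensor product, obtaining that each $v_k^*v_l$ is a scalar multiple of $1_{\N}$. Combined with the trace identities from the previous step, this gives $v_k^*v_l=\delta_{kl}\,1_{\N}$ for all $1\leq k,l\leq d$.

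To finish, I use that $\N$ is finite: each $v_k$ is an isometry in a finite von Neumann algebra, hence a unitary, so $v_lv_l^*=1_{\N}$. For any $k\neq l$ (which exists because $d\geq 2$), multiplying the relation $v_k^*v_l=0$ on the right by $v_l^*$ gives $v_k^*=0$, contradicting $v_k^*v_k=1_{\N}$. The main obstacle in this plan is the coefficient matching in the unitarity step, which is precisely where the stronger linear independence hypothesis on $\{a_k^*a_l\}$ (as opposed to merely the $\{a_k\}$) plays its central role.
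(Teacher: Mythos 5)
Your proof is correct. Note that the paper does not prove this proposition at all --- it is quoted directly from \cite[Corollary 2.3]{HM} --- and your argument is essentially a faithful reconstruction of Haagerup--Musat's own proof: you invoke their unitary-implementation characterization of factorizable Markov maps on $M_n(\Cdb)$ (this is the one nontrivial external input, \cite[Theorem 2.2]{HM}, which you must still cite), expand $u=\sum_k a_k\otimes v_k$, use linear independence of $\{a_k\}$ to read off $\tau_{\mathcal N}(v_k^*v_l)=\delta_{kl}$ (with $v_k=0$ for $k>d$ by faithfulness), use linear independence of $\{a_k^*a_l\}$ together with $u^*u=1\otimes 1$ to force each $v_k^*v_l$ to be scalar, hence $v_k^*v_l=\delta_{kl}1$, and derive the contradiction from $d\geq 2$ and finiteness of $\mathcal N$. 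All steps check out.
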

\bigskip

For any 
$$
\varphi=
\begin{pmatrix}
\varphi_1\\ \vdots\\ \varphi_n
\end{pmatrix}\quad\hbox{and}\quad
\varphi'=
\begin{pmatrix}
\varphi'_1\\ \vdots\\ \varphi'_n
\end{pmatrix}
$$
in $\mathbb{C}^n$, we set 
$$
\varphi'\otimes\varphi = 
\bigl[\varphi'_i\varphi_j]_{1\leq i,j\leq n}
\in M_n(\mathbb{C}).
$$
Also we set
$$
\overline{\varphi}=
\begin{pmatrix}
\overline{\varphi_1}\\ \vdots\\ \overline{\varphi_n}
\end{pmatrix}
\quad\hbox{and}\qquad
\varphi'\cdotp\varphi = 
\begin{pmatrix}
\varphi'_1\varphi_1\\ \vdots\\\varphi'_1 \varphi_n
\end{pmatrix}.
$$
Note that $\overline{\varphi}\otimes\varphi$ is a positive semidefinite matrix.

Using the argument in \cite[Example 3.2]{HM}, we 
obtain the following
criterion concerning Schur multipliers.

\begin{corollary}\label{Cor. 5.2}
Let $n\geq 1$ be an integer, let $d\geq 2$, let
$\varphi(1),\ldots,\varphi(d)\in\mathbb{C}^n$ and let 
$$
A=\sum_{k=1}^d \overline{\varphi(k)}\otimes \varphi(k)\,\in M_n(\mathbb C).
$$
Assume that the diagonal entries of $A$ are equal to $1$ 
and that 
the family 
\begin{equation}\label{LinInd}
\{\overline{\varphi(k)}\cdotp\varphi(l)  :1\leq k,l\leq d\}
\end{equation}
is linearly independent. Then the Schur multiplier $T_A : M_n(\Cdb)\to M_n(\Cdb)$
associated
with $A$ is ucp but it is not factorizable.
\end{corollary}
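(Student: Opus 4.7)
The plan is to apply Proposition \ref{Prop 5.1} (the Haagerup--Musat criterion) after realizing $T_A$ in Kraus form through diagonal matrices built from the vectors $\varphi(k)$. The hypothesis of the corollary is essentially designed to match the hypothesis of Proposition \ref{Prop 5.1}, with the diagonal embedding translating linear independence in $\mathbb{C}^n$ into linear independence in $M_n(\mathbb{C})$.

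First I would check that $T_A$ is ucp and trace preserving. For each $k$, the rank-one matrix $\overline{\varphi(k)}\otimes\varphi(k)$ is positive semidefinite: for $\xi\in\mathbb{C}^n$, one has
$$
\bigl\langle (\overline{\varphi(k)}\otimes\varphi(k))\xi,\xi\bigr\rangle = \Bigl\lvert\sum_j \varphi(k)_j\xi_j\Bigr\rvert^2 \geq 0.
$$
Hence $A$ is positive semidefinite as a sum of such matrices, and by Lemma \ref{Positive}, $T_A$ is completely positive. The diagonal condition $A_{ii}=1$ gives unitality, and as noted in Subsection \ref{SS-Fact}, any unital positive Schur multiplier is automatically trace preserving.

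Next I would exhibit the Kraus decomposition. For each $k\in\{1,\ldots,d\}$, set $a_k:=\operatorname{diag}(\varphi(k))\in M_n(\mathbb{C})$. A direct entrywise computation gives $(a_k^* x a_k)_{ij} = \overline{\varphi(k)_i}\,\varphi(k)_j\, x_{ij}$, so summing over $k$ yields
$$
\sum_{k=1}^d a_k^* x a_k = \bigl[A_{ij}\, x_{ij}\bigr] = T_A(x).
$$
Then I would verify the linear independence of $\{a_k^* a_l:1\leq k,l\leq d\}$. Since $a_k^* a_l = \operatorname{diag}\bigl(\overline{\varphi(k)}\cdotp\varphi(l)\bigr)$ and the map $v\mapsto\operatorname{diag}(v)$ from $\mathbb{C}^n$ into $M_n(\mathbb{C})$ is an injective linear map, the family $\{a_k^*a_l\}$ is linearly independent in $M_n(\mathbb{C})$ if and only if $\{\overline{\varphi(k)}\cdotp\varphi(l):1\leq k,l\leq d\}$ is linearly independent in $\mathbb{C}^n$, which is exactly our hypothesis \eqref{LinInd}.

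Since $d\geq 2$, all hypotheses of Proposition \ref{Prop 5.1} are satisfied, and we conclude that $T_A$ is not factorizable. There is no real obstacle here—the argument is a routine translation of the hypothesis via the diagonal embedding, mirroring the scheme in \cite[Example 3.2]{HM}; the essential content lies in the choice of Kraus operators $a_k=\operatorname{diag}(\varphi(k))$.
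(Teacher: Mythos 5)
Your proof is correct and follows essentially the same route as the paper: both realize $T_A$ in Kraus form via the diagonal matrices $\operatorname{diag}(\varphi(k))$, observe that $a_k^*a_l=\operatorname{diag}\bigl(\overline{\varphi(k)}\cdotp\varphi(l)\bigr)$ so that the injectivity of the diagonal embedding transfers the linear independence hypothesis, and then invoke Proposition \ref{Prop 5.1}. The only (harmless) difference is that you verify positivity of $A$ directly from the rank-one decomposition, whereas the paper simply cites Lemma \ref{Positive}.
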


\begin{proof}
The Schur multiplier $T_A$ is ucp by Lemma \ref{Positive}. For any $\varphi={}^t \!(\varphi_1,\ldots,\varphi_n)\in\mathbb{C}^n$,
let ${\mathfrak a}(\varphi)\in M_n(\mathbb{C})$ 
be the diagonal matrix with entries
$\varphi_1,\ldots,\varphi_n$. Then $x\mapsto 
{\mathfrak a}(\varphi)^*x{\mathfrak a}(\varphi)$ 
is the Schur multiplier associated with 
$\overline{\varphi}\otimes\varphi$. 
Hence  we have
\[
T_A(x)=\sum_{k=1}^d {\mathfrak a}(\varphi(k))^* x {\mathfrak a}(\varphi(k)),\quad x\in M_n(\mathbb{C}).
\]
For any $1\leq k,l\leq n$, we have
${\mathfrak a}(\varphi(k))^*{\mathfrak a}(\varphi(l))=
{\mathfrak a}(\overline{\varphi(k)}\cdotp\varphi(l))$.
Hence 
the family (\ref{LinInd}) is linearly independent in
$\mathbb{C}^n$ if and only if the family 
$\{{\mathfrak a}(\varphi(k))^*{\mathfrak a}(\varphi(l))
:1\leq k,l\leq d\}$
is linearly independent in
$M_n(\mathbb{C})$. 
The result therefore follows from Proposition 
\ref{Prop 5.1}.
\end{proof}

\begin{theorem}\label{CEX}
There exists a function $u\colon\S_3\to\Cdb$ such that $M_u\colon VN(\S_3)\to VN(\S_3)$ is ucp but $M_u$
does not admit any absolute dilation.
\end{theorem}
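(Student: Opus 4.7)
The plan is to produce $u$ meeting the hypotheses of Corollary \ref{Cor. 5.2}: if we can write the Herz-Schur matrix $A = [u(st^{-1})]_{s,t\in\S_3}\in M_6(\Cdb)$ as $\sum_{k=1}^{d}\overline{\varphi(k)}\otimes\varphi(k)$ with the family $\{\overline{\varphi(k)}\cdot\varphi(l)\}_{k,l=1}^d$ linearly independent in $\Cdb^{\S_3}$, then $T_{\phi_u}$ is ucp but not factorizable; Lemma \ref{Equiv} then shows it admits no absolute dilation, and the contrapositive of Theorem \ref{Transf}(i) transfers this to $M_u$. A dimension count forces $d^2 \leq 6$, hence $d = 2$; so $A$ must have rank exactly two, which suggests taking $u$ to be a matrix coefficient of the $2$-dimensional irreducible representation of $\S_3$.

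To this end, fix the irreducible unitary representation $\pi\colon\S_3\to U(\Cdb^2)$ realized by $\pi((123)) = \operatorname{diag}(\omega,\omega^2)$ with $\omega = e^{2\pi i/3}$, and $\pi((12)) = \bigl(\begin{smallmatrix}0&1\\1&0\end{smallmatrix}\bigr)$. For a unit vector $\xi = (\alpha,\beta)\in\Cdb^2$, set $u(g) = \langle\pi(g)\xi,\xi\rangle$ and $\varphi(k)_g = \langle \pi(g)^*\xi, e_k\rangle$ for $k=1,2$ and $g\in \S_3$. Since $u$ is a positive definite function with $u(e_{\S_3})=1$, $M_u$ is ucp; moreover the unitarity of $\pi$ together with Parseval's identity yields $u(st^{-1})= \sum_{k=1}^{2}\overline{\varphi(k)_s}\,\varphi(k)_t$, providing the decomposition of $A$ required by Corollary \ref{Cor. 5.2}.

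It remains to pick $\xi$ so that the four vectors $\overline{\varphi(k)}\cdot\varphi(l)$ ($k,l\in\{1,2\}$) are linearly independent in $\Cdb^{\S_3}$. Ordering $\S_3$ with the rotation subgroup $\{e,(123),(132)\}$ first, an explicit computation shows that the two diagonal products $\overline{\varphi(k)}\cdot\varphi(k)$ lie in the $2$-dimensional subspace spanned by the indicator functions of the two cosets of $\langle(123)\rangle$, with coefficients $(|\alpha|^2,|\beta|^2)$ and $(|\beta|^2,|\alpha|^2)$ respectively, while the cross products $\overline{\varphi(1)}\cdot\varphi(2)$ and $\overline{\varphi(2)}\cdot\varphi(1)$ take the form $\alpha\bar\beta\cdot w_+$ and $\bar\alpha\beta\cdot w_-$ for certain nonzero vectors $w_\pm\in\Cdb^{\S_3}$ built from the non-trivial characters of $\Zdb/3$ on each coset. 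A Vandermonde argument shows that $\Cdb^{\S_3}$ decomposes as the direct sum of these three $2$-dimensional subspaces, so linear independence of the four products reduces to the two non-degeneracy conditions $|\alpha|\neq|\beta|$ and $\alpha\beta\neq 0$. The main obstacle is precisely to avoid the degeneracies produced by ``symmetric'' choices: $\xi = e_1$ kills the cross products (since then $\varphi(1)$ and $\varphi(2)$ have disjoint supports), while $\xi = (e_1+e_2)/\sqrt{2}$ identifies the two diagonal products; breaking both symmetries by an ``off-diagonal'' $\xi$ with $|\alpha|\neq|\beta|$, for example $\xi = (1/\sqrt{5},\,2/\sqrt{5})$, completes the construction.
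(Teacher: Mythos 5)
Your proposal is correct, and at the level of strategy it coincides with the paper's: both reduce to Corollary \ref{Cor. 5.2} via Lemma \ref{Equiv} and Theorem \ref{Transf}(i), both observe that linear independence of the $d^2$ products forces $d=2$ and hence $\operatorname{rk}(A)=2$, and both therefore build $u$ out of the $2$-dimensional irreducible representation $\pi$ of $\S_3$. In fact the paper's explicit solution $(a,\dots,f)$ is precisely (the conjugate of) a diagonal matrix coefficient $\langle\pi(\cdot)w,w\rangle$ with $w=(1,\sqrt2)/\sqrt3$ in the eigenbasis of $\pi(123)$, so your family $u=\langle\pi(\cdot)\xi,\xi\rangle$ contains the paper's example as a member. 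Where you genuinely diverge is in the execution: the paper solves two linear systems to pin down $(a,\dots,f)$, extracts $\varphi,\psi$ as eigenvectors of the $6\times6$ matrix $A$ via Gram--Schmidt, and verifies independence of the four Hadamard products by computing a $3\times3$ determinant; you instead get the rank-two decomposition $A=\sum_k\overline{\varphi(k)}\otimes\varphi(k)$ for free from Parseval applied to $\pi(t)^*\xi$, and you verify independence structurally, by locating the two diagonal products in $\operatorname{span}\{\mathbf{1}_{A_3},\mathbf{1}_{\S_3\setminus A_3}\}$ and the two cross products in the two conjugate character subspaces on the cosets of $A_3$, reducing everything to $|\alpha|\neq|\beta|$ and $\alpha\beta\neq0$. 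Your route buys a cleaner, computation-free verification and makes transparent exactly which vector states $\xi$ work (all but the degenerate ones); the paper's route buys a fully explicit matrix $A$ and eigenvector data, which it reuses in the subsequent remarks. One small caveat: the two cross products are not literally scalar multiples $\alpha\bar\beta\,w_+$ and $\bar\alpha\beta\,w_-$ of fixed vectors --- the scalar in front of the rotation-coset block is $\bar\alpha\beta$ while on the transposition block it is $\alpha\bar\beta$ (and conversely) --- but since each block coefficient is nonzero exactly when $\alpha\beta\neq0$ and the two cross products still lie in the two complementary $2$-dimensional character subspaces, the direct-sum argument goes through unchanged.
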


\begin{proof}
We identify each \( \sigma \in \S_3 \) with the basis vector \( \epsilon_\sigma \in \ell^2_{{\mathcal S}_3} \). We enumerate the six elements of \( \S_3 \) as follows  
\[
\mathcal{B} = \{1, (123), (132), (12), (23), (31)\}.
\]
Let \( (a, b, c, d, e, f) \in \mathbb{C}^6 \). 
Let $V\in VN(G)$ be defined by
\begin{equation}\label{V}
V = a\lambda(1) + b\lambda(123) + c\lambda(132) + d\lambda(12) + e\lambda(23) + f\lambda(31).
\end{equation}
Let \( A \) be the matrix of $V$ in the standard basis 
of \( \ell^2_{{\mathcal S}_3} \),
according to the enumeration \( \mathcal{B} \). 
It can be easily verified that the matrix \( A \) is given by
\begin{equation}\label{A0}
A = \begin{pmatrix} 
a & c & b & d & e & f \\
b & a & c & f & d & e \\
c & b & a & e & f & d \\
d & f & e & a & b & c \\
e & d & f & c & a & b \\
f & e & d & b & c & a
\end{pmatrix}.
\end{equation}

Let \( u \colon \S_3 \to \mathbb{C} \) be defined by
\[
u(1) = a, \ u(123) = b, \ u(132) = c, \ u(12) = d, \ u(23) = e, \ u(31) = f.
\]
Then one can check that 
the matrix \( [u(st^{-1})]_{(s,t) \in {\mathcal S}_3^2} \) written according 
to the enumeration \( \mathcal{B} \) 
is also equal to \( A \).

Hence by Corollary \ref{Cor. 5.2}, Lemma \ref{HS} and the discussion preceding Proposition \ref{Prop 5.1}, it suffices to 
find a matrix $A$ of the form (\ref{A0}), with $a=1$, as well
as two vectors $\varphi,\psi\in{\mathbb C}^6$ such that 
\begin{equation}\label{Rk2}
A = \overline{\varphi} \otimes \varphi + \overline{\psi} \otimes \psi,
\end{equation}
and such that the quadruple 
\begin{equation}\label{Quad}
\{ \overline{\varphi}\cdotp \varphi, 
\overline{\varphi}\cdotp \psi, \overline{\psi} \cdotp\varphi, \overline{\psi}\cdotp \psi \}
\end{equation}
is linearly independent in \( \mathbb{C}^6 \). 

Instead of introducing \(\varphi\) and \(\psi\) without context, we will first explain how these two vectors are constructed. While this approach slightly lengthens the proof, it aims to make it more informative.

To proceed, we 
apply the Peter-Weyl theorem (see e.g. \cite[Section 5.2]{F}) to the group \( \S_3 \). Let
\( I \colon \S_3 \to \{1\} \subset \mathbb{C} \) be the trivial representation and let \( \varepsilon \colon \S_3 \to \{-1, 1\} \subset \mathbb{C} \) be the signature. Next, let 
$$
E = \{(x_1, x_2, x_3) \in \mathbb{C}^3 : x_1 + x_2 + x_3 = 0 \},
$$
equipped with the Hermitian structure inherited from $\mathbb{C}^3$,
and let \( \pi \colon \S_3 \to B(E) \) be the representation
defined by
\[
\pi(\sigma) \begin{pmatrix} x_1 \\ x_2 \\ x_3 \end{pmatrix} = \begin{pmatrix} x_{\sigma(1)} \\ x_{\sigma(2)} \\ x_{\sigma(3)} \end{pmatrix}, \quad \sigma \in S_3.
\]
Then up to unitary equivalence,
\( I \), \( \varepsilon \), and \( \pi \) are the three (non-equivalent) 
irreducible representations of \( \S_3 \).
Hence  the Peter-Weyl theorem ensures the existence of a 
unitary operator \( U \colon \ell^2_{{\mathcal S}_3} \to \mathbb{C} 
\oplus \mathbb{C} \oplus E \oplus E \)
such that for all \(\sigma \in \S_3\),
\begin{equation}\label{PW}
\lambda(\sigma) = U^* \left[ I(\sigma) \oplus \varepsilon(\sigma) \oplus \pi(\sigma) \oplus \pi(\sigma) \right] U.
\end{equation}
Choose the orthonormal basis \( (\xi, \eta) \) 
of \( E \) given by
\[
\xi = \frac{1}{\sqrt{3}} \begin{pmatrix} 1 \\ j \\ j^2 \end{pmatrix}, \quad \eta = \frac{1}{\sqrt{3}} \begin{pmatrix} 1 \\ j^2 \\ j \end{pmatrix},
\]
where \( j \) is a primitive cube root of unity. We easily check that in this basis, we have
\[
\pi(123) = \begin{pmatrix} j & 0 \\ 0 & j^2 \end{pmatrix}, \quad \pi(132) = \begin{pmatrix} j^2 & 0 \\ 0 & j \end{pmatrix},
\]
\[
\pi(12) = \begin{pmatrix} 0 & j^2 \\ j & 0 \end{pmatrix}, \quad \pi(23) = \begin{pmatrix} 0 & 1 \\ 1 & 0 \end{pmatrix}, \quad \pi(31) = \begin{pmatrix} 0 & j \\ j^2 & 0 \end{pmatrix}.
\]
Let
\[
\Gamma = a\pi(1) + b\pi(123) + c\pi(132) + d\pi(12) + e\pi(23) + f\pi(31).
\]
Let \( B \) be the matrix of \( \Gamma \) in the basis \( (\xi, \eta) \). It follows from above that
\begin{equation}
B = \begin{pmatrix} a + bj + cj^2 & dj^2 + e + fj \\ dj + e + fj^2 & a + bj^2 + cj \end{pmatrix}.
\end{equation}
In the sequel, we use the symbol $\sim$ for unitary equivalence of matrices. Since $A$ is a matrix of the operator $V$ 
given by (\ref{V}) 
in an orthonormal basis, 
(\ref{PW}) provides 
\begin{equation}\label{unieqi}
A \sim 
[a + b + c + d + e + f] \oplus 
[a + b + c - d - e - f] \oplus B \oplus B,
\end{equation}
where the matrix
on the right-hand side is written 
in a block-diagonal form 
(with two $1\times 1$ blocks and two $2\times 2$ blocks).

We look for \( (a, b, c, d, e, f) \) with \( a = 1 \), 
such that \( A \geq 0 \), and \( \text{rk}(A) = 2 \), so that
$A$ will have the form (\ref{Rk2}).
To achieve this, it suffices to have
\[
a + b + c = 0, \quad d + e + f = 0, \quad B\geq 0\quad\quad \text{and} \quad \text{rk}(B) = 1.
\]
It turns out that there exists \( (a, b, c, d, e, f) \) 
satisfying these conditions, with
\[
B = \begin{pmatrix} 1 & \sqrt{2} \\ \sqrt{2} & 2 \end{pmatrix}.
\]
Finding $(a,b,c,d,e,f)$ as above amounts to have
\[
\left\{\begin{aligned}
a&=1\\
a+b+c&=0\\
a+bj+cj^2&=1\\
a+bj^2+cj&=2
\end{aligned}\right.
\qquad \text{and} \qquad
\left\{\begin{aligned}
d+e+f&=0\\
dj^2+ e +fj&=\sqrt2\\
dj+ e +fj^2&=\sqrt2
\end{aligned}\right.
\]
These two systems actually have a unique solution, given by  
\[
a = 1, \quad b = -\frac{1}{2} + \frac{i}{2\sqrt{3}}, \quad c = \overline{b} = -\frac{1}{2} - \frac{i}{2\sqrt{3}}, \quad d = f = -\frac{\sqrt{2}}{3}, \quad e = \frac{2\sqrt{2}}{3}.
\]
From now on, $A$ will be the matrix defined with these 
values of $a,b,c,d,e,f$.
Thus,
\begin{equation}\label{A}  
A = \begin{pmatrix}  
1 & \overline{b} & b & -\delta & 2\delta  & -\delta \\  
b & 1 & \overline{b} & -\delta  & -\delta  & 2\delta \\  
\overline{b} & b & 1 & 2\delta & -\delta  & -\delta \\  
-\delta & -\delta & 2\delta & 1 & b & \overline{b} \\  
2\delta & -\delta  & -\delta  & \overline{b} & 1 & b \\  
-\delta  & 2\delta & -\delta  & b & \overline{b} & 1  
\end{pmatrix},  
\end{equation}  
where  
\[
\delta = \frac{\sqrt{2}}{3} \quad\hbox{and}\quad  
b = -\frac{1}{2} + \frac{i}{2\sqrt{3}}.
\]  
We observe that
$$
B\sim\begin{pmatrix} 0 & 0 \\ 0 & 3\end{pmatrix},
$$
which implies that  
\begin{equation}\label{Diag}  
A \sim {\rm Diag}\{0,0,0,0,3,3\}.  
\end{equation} 
We will use this to write $A$ in the form (\ref{Rk2}).

We set
\[
\varphi = \begin{pmatrix} 1 \\ b \\ \overline{b} \\ -\delta \\ 2\delta \\ -\delta \end{pmatrix},
\qquad
\varphi' = \begin{pmatrix} \overline{b} \\ 1 \\ b \\ -\delta \\ -\delta \\ 2\delta \end{pmatrix}\qquad\hbox{and}\qquad
\psi := \varphi' - \frac{\langle \varphi', \varphi \rangle}{\|\varphi\|^2} \, \varphi.
\]
The first row of $A$ (regarded as a functional on $\Cdb^6$) 
is equal to
$\langle\,\cdotp,\varphi\rangle$. Hence if $X\in{\rm Ker}(A)$,
we have
$\langle X,\varphi\rangle=0$. By (\ref{Diag}), this implies that 
\begin{equation}\label{3}
A\varphi=3\varphi.
\end{equation}
Likewise (using the second row of $A$), we obtain that $A\varphi'=3\varphi'$.
Hence $(\varphi,\psi)$ is an orthogonal basis of ${\rm Ker}(A-3 I)$. We now compute $\psi$. 
Taking the first coordinates
in the relation (\ref{3}), we 
obtain that $\norm{\varphi}^2=3$. Next,
\begin{align*}
\langle \varphi',\varphi\rangle &= \overline{b} + \overline{b} + b^2 +\delta^2 -2\delta^2 -2\delta^3\\
& = 2\overline{b} + b^2 -3\delta^2.
\end{align*}
Moreover 
\[
b^2 = \frac{1}{6} - \frac{i}{2\sqrt{3}} 
\quad \text{and} \quad  
\delta^2 = \frac{2}{9}.
\]
Hence 
$$
b^2 -3\delta^2 =\frac16 -\frac23 -\frac{i}{2\sqrt3} = -\frac12-\frac{i}{2\sqrt3} =\overline{b}.
$$
This yields
$$
\langle \varphi',\varphi\rangle=3\overline{b}
\quad\hbox{and}\quad
\psi =\varphi'-\overline{b}\varphi.
$$
Consequently,
\[
\psi = \begin{pmatrix}  
0 \\ 1 - \vert b \vert^2 \\ b - \overline{b}^2 \\  
\delta (\overline{b} - 1) \\ -\delta (2\overline{b} + 1) \\ \delta (\overline{b} + 2)  
\end{pmatrix}.  
\]
Moreover,
$$
\norm{\psi}^2= \norm{\varphi'}^2 +\vert b\vert^2\norm{\varphi}^2
-2{\rm Re}\bigl(b\langle\varphi',\varphi\rangle\bigr).
$$
We note that $\norm{\varphi'}^2=1$ (same argument as $\varphi$) and
that 
\begin{equation}\label{b}
\vert b\vert^2=\frac13.
\end{equation}
We deduce that 
$\norm{\psi}^2=2$.

By (\ref{Diag}), $A$ is equal to 3 times 
the orthogonal projection on ${\rm Ker}(A-3 I)$.
It therefore follows from above that
$$
A = \overline{\varphi}\otimes \varphi + \frac32\,\overline{\psi}\otimes\psi.
$$

It now suffices to show that the quadruple (\ref{Quad}) is linearly independent. 
Using (\ref{b}), we compute
$$
\overline{\varphi}\cdotp\varphi= \begin{pmatrix} 1 \\ \frac13 \\ \frac13 \\ \frac29 \\  \frac89 \\ \frac29\end{pmatrix},
\quad
\overline{\psi}\cdotp\psi=\begin{pmatrix} 0 \\ \frac49 \\ \frac49 \\ \frac{14}{27} \\  \frac{2}{27} \\ \frac{14}{27}\end{pmatrix},
\quad
\overline{\psi}\cdotp\varphi=\begin{pmatrix} 0 \\ \frac23 b\\ \overline{b}^2 -\frac{b}{3} \\ 
-\frac29(b-1) \\  -\frac49(2b+1) \\ -\frac29(b+2)\end{pmatrix},
\quad
\overline{\varphi}\cdotp\psi=\begin{pmatrix} 0 \\ \frac23 \overline{b}\\ b^2 -\frac{\overline{b}}{3} \\ 
-\frac29(\overline{b}-1) \\  -\frac49(2\overline{b}+1) \\ -\frac29(\overline{b}+2)\end{pmatrix}.
$$
The first entries of these four vectors are $1,0,0,0$, hence it suffices to show that the last three ones,
$\{\overline{\psi}\cdotp\psi, \overline{\psi}\cdotp\varphi, \overline{\varphi}\cdotp\psi\}$
are linearly independent. We consider the matrix $M$ of these three vectors,
$$
M = \begin{pmatrix} 0 & 0 & 0 \\
\frac49 &\frac23 b & \frac23 \overline{b} \\
\frac49 & \overline{b}^2 -\frac{b}{3} & b^2-\frac{\overline{b}}{3} \\
\frac{14}{27} & -\frac29(b-1) & -\frac29(\overline{b}-1)\\
\frac{2}{27} &  -\frac49(2b+1) &  -\frac49(2\overline{b}+1)\\
\frac{14}{27} & -\frac29(b+2)& -\frac29(\overline{b}+2)
\end{pmatrix}.
$$
Let $\Delta$ be the determinant of the rows 2,3,5 of $M$, that is
$$
\Delta ={\rm det}\begin{pmatrix} 
\frac49 &\frac23 b & \frac23 \overline{b} \\
\frac49 & \overline{b}^2 -\frac{b}{3} & b^2-\frac{\overline{b}}{3} \\
\frac{2}{27} &  -\frac49(2b+1) &  -\frac49(2\overline{b}+1)
\end{pmatrix}.
$$
Then $\Delta= \frac{8\sqrt{3} i}{81}
\not=0$. Hence ${\rm rk}(M)=3$ and we are done.

\end{proof}

\begin{remark}
\ 

\smallskip (1)
\cite[Example 3.2]{HM} provides an example of a non-factorizable ucp Schur multiplier, which is not a Herz-Schur multiplier. In contrast, our example above is a Herz-Schur multiplier. Moreover, it represents the smallest possible instance of such a multiplier, since any group of smaller order than $\S_3$ must be abelian, and \cite[Remark 6.3(a)]{AD} ensures 
that every ucp Herz-Schur multiplier in that setting is factorizable.
(see also Theorem 
\ref{abgp}).

\smallskip (2) As a von Neumann algebra, 
$VN(\S_3)$ is isomorphic to 
$\ell^\infty_2\oplus M_2(\Cdb)$. Hence Theorem \ref{CEX} implicitly provides a 
trace preserving ucp map
on $\ell^\infty_2\oplus M_2(\Cdb)$ which is not factorizable. This contrasts with 
the fact that any trace preserving ucp map
on $M_2(\Cdb)$ is factorizable
(see \cite{K}).
\end{remark}

\begin{remark}
Let \( (\M, \tau) \) be a tracial von Neumann algebra. Let \( 1 < p < \infty \) and 
let \( S : L^p(\M, \tau) \to L^p(\M, \tau) \) be a completely positive contraction. We say that \( S \) admits a completely isometric dilation if there exist another tracial von Neumann algebra \( (M, \sigma) \), two complete contractions \( J : L^p(\M, \tau) \to L^p(M, \sigma) \) and \( Q : L^p(M, \sigma) \to L^p(\M, \tau) \), as well as a complete surjective isometry \( V : L^p(M, \sigma) \to L^p(M, \sigma) \), such that \( S^k = QV^kJ \) for all \( k \geq 0 \). For more information on this topic, we refer to \cite{A, D0, D, DL1, JL} and the references therein. The first example of a completely positive contraction on a noncommutative \( L^p \)-space that does not admit a completely isometric dilation was provided in \cite{JL}. Building on \cite[Example 3.2]{HM}, C. Duquet demonstrated in \cite{D} the existence of a ucp Schur multiplier \( S \) on \( M_4(\mathbb{C}) \) such that, for all \( 1 < p \neq 2 < \infty \), \( S : S^p_4 \to S^p_4 \) does not admit a completely isometric dilation.

Thanks to Theorem \ref{CEX}, we can now obtain a ucp Fourier multiplier \( S : VN(\S_3) \to VN(\S_3) \) such that for all \( 1 < p \neq 2 < \infty \), \( S : L^p(VN(\S_3)) \to L^p(VN(\S_3)) \) does not admit a completely isometric dilation. Indeed, let \( S = M_u \) as given in Theorem \ref{CEX}, and let \( 1 < p \neq 2 < \infty \). Assume for the sake of contradiction that \( S : L^p(VN(\S_3)) \to L^p(VN(\S_3)) \) has a completely isometric dilation. Then, arguing as in the proof of part (i) of Theorem \ref{Transf}, we obtain that the Herz-Schur multiplier \( T_{\phi_u} : S^p(\ell^2_{{\mathcal S}_3}) \to S^p(\ell^2_{{\mathcal S}_3}) \) also admits a completely isometric dilation. Applying \cite[Theorem 2.10]{D}, this implies that \( T_{\phi_u} : B(\ell^2_{{\mathcal S}_3}) \to B(\ell^2_{{\mathcal S}_3}) \) admits an absolute dilation, leading to a contradiction.

This yields a completely positive contraction without any completely isometric dilation on a noncommutative \( L^p \)-space of dimension
6.
\end{remark}

\bigskip
\noindent
{\bf Acknowledgements.} 
We express our gratitude to \'Eric Ricard for his stimulating and insightful discussions. We also extend our 
thanks to the International Center for Mathematical Sciences (ICMS) for fostering an exceptional research
 environment, which greatly accelerated and facilitated our work through a RiP grant. Additionally, 
the second author gratefully acknowledges the University of Marie and Louis Pasteur for providing an excellent research environment during her visit.

\bigskip

\bigskip
\end{document}